\documentclass[10pt]{article} 

\usepackage{xspace}
\usepackage{url}
\usepackage{mathtools}
\usepackage{amssymb}
\usepackage{amsthm}
\usepackage{empheq}
\usepackage{latexsym}
\usepackage{enumitem}
\usepackage{eurosym}
\usepackage{dsfont}
\usepackage{appendix}
\usepackage{color} 
\usepackage[unicode,hypertexnames=false]{hyperref}
\usepackage{frcursive}
\usepackage[utf8]{inputenc}
\usepackage[T1]{fontenc}
\usepackage{geometry}
\usepackage{multirow}
\usepackage{comment}
\usepackage{todonotes}
\usepackage{lmodern}
\usepackage{anyfontsize}
\usepackage{stmaryrd}
\usepackage{natbib}
\usepackage{cleveref}
\usepackage[english]{babel}
\usepackage[english=british]{csquotes}
\usepackage{mathtools}
\usepackage{accents}
\usepackage{color}
\usepackage{bm}
\usepackage{subfig}
\usepackage{verbatim}
\usepackage{url}
\usepackage{aliascnt}
\usepackage{microtype}

\setcitestyle{numbers,open={[},close={]}}

\newcommand\citeayn[2][]{\citeauthor*{#2} (\citeyear{#2}) \cite[#1]{#2}}
\newcommand\citeny[2][]{\textnormal{\cite{#2}~(\citeyear{#2})}}

\renewcommand{\emph}[1]{\textit{#1}}

\usepackage[english]{babel}
\usepackage[english=british]{csquotes}

\DeclareMathOperator*{\argmax}{arg\;max}
\DeclareMathOperator*{\argmin}{arg\;min}

\definecolor{red}{rgb}{0.7,0.15,0.15}
\definecolor{green}{rgb}{0,0.5,0}
\definecolor{blue}{rgb}{0,0,0.7}
\hypersetup{colorlinks, linkcolor={blue},citecolor={green}, urlcolor={red}}

\usepackage{xcolor,pifont}

\makeatletter
\@addtoreset{equation}{section}

\makeatother

\newtheorem{theorem}{Theorem}[section]

\newaliascnt{assumption}{theorem}
\newtheorem{assumption}[assumption]{Assumption}
\aliascntresetthe{assumption}

\newaliascnt{proposition}{theorem}
\newtheorem{proposition}[proposition]{Proposition}
\aliascntresetthe{proposition}

\newaliascnt{definition}{theorem}
\newtheorem{definition}[definition]{Definition}
\aliascntresetthe{definition}

\newaliascnt{lemma}{theorem}
\newtheorem{lemma}[lemma]{Lemma}
\aliascntresetthe{lemma}

\newaliascnt{example}{theorem}

\aliascntresetthe{example}

\newaliascnt{corollary}{theorem}

\aliascntresetthe{corollary}

\newaliascnt{remark}{theorem}
\newtheorem{remark}[remark]{Remark}
\aliascntresetthe{remark}

\newaliascnt{condition}{theorem}
\newtheorem{condition}[condition]{Condition}
\aliascntresetthe{condition}

\crefname{theorem}{theorem}{theorems}
\Crefname{theorem}{Theorem}{Theorems}

\crefname{assumption}{assumption}{assumptions}
\Crefname{assumption}{Assumption}{Assumptions}

\crefname{proposition}{proposition}{propositions}
\Crefname{proposition}{Proposition}{Propositions}

\crefname{definition}{definition}{definitions}
\Crefname{definition}{Definition}{Definitions}

\crefname{lemma}{lemma}{lemmas}
\Crefname{lemma}{Lemma}{Lemmas}

\crefname{example}{example}{examples}
\Crefname{example}{Example}{Examples}

\crefname{corollary}{corollary}{corollaries}
\Crefname{corollary}{Corollary}{Corollaries}

\crefname{remark}{remark}{remarks}
\Crefname{remark}{Remark}{Remarks}

\crefname{equation}{equation}{equations}
\Crefname{equation}{Condition}{Conditions}

\crefname{condition}{condition}{conditions}
\Crefname{condition}{Condition}{Conditions}

\DeclareUnicodeCharacter{014D}{\=o}
\def \D{\mathbb{D}}
\def \E{\mathbb{E}}
\def \F{\mathbb{F}}

\def \H{\mathbb{H}}

\def \L{\mathbb{L}}

\def \N{\mathbb{N}}

\def \P{\mathbb{P}}

\def \R{\mathbb{R}}

\def\Ac{{\cal A}}
\def\Bc{{\cal B}}
\def\Cc{{\cal C}}

\def\Fc{{\cal F}}

\def\Kc{{\cal K}}

\def\Oc{{\cal O}}
\def\Pc{{\cal P}}

\def\Rc{{\cal R}}
\def\Sc{{\cal S}}

\def\Vc{{\cal V}}
\def\Wc{{\cal W}}

\def\eps{\varepsilon}

\def\drm{\mathrm{d}}
\newcommand{\smallertext}[1]{\text{\fontsize{6}{6}\selectfont$#1$}}
\newcommand{\smalltext}[1]{\text{\fontsize{4}{4}\selectfont$#1$}}

\def\eps{\varepsilon}
\definecolor{bleudefrance}{rgb}{0.19, 0.55, 0.91}

\definecolor{darkspringgreen}{RGB}{60, 179, 113}

\definecolor{viola}{RGB}{200, 0, 255}

\title{Forcing and duality‑corrected contracts for volatility control}

\author{Alessandro {\sc Chiusolo}\thanks{Department of Operations Research and Financial Engineering (ORFE), Princeton University, USA.} \thanks{Research partially supported by the NSF grant DMS-2307736.} \and Emma {\sc Hubert}\thanks{CEREMADE, Université Paris–Dauphine, PSL Research University, France.} \footnotemark[2] \and Dylan {\sc Possama\"i}\thanks{Department of Mathematics, ETH Zürich, Switzerland. This author gratefully acknowledges support from the SNF project MINT 205121-21981.} \and Nizar {\sc Touzi}\thanks{New York University, Tandon School of Engineering, USA.}}

\date{\today}

\begin{document}
	
\maketitle

\begin{abstract}
In this paper, we revisit the construction of optimal incentives in continuous-time principal--agent problems with drift and volatility control. Originally, a general approach relying on dynamic programming and second-order backward stochastic differential equations (2BSDEs) was developed by \citeayn{cvitanic2018dynamic} to determine the optimal form of contracts in this setting. More recently, \citeayn{chiusolo2026new} proposed a BSDE-based approach by introducing an alternative `contractible-volatility' problem for the principal. In addition to the proposed new method, this work highlights that the optimality result of \cite{cvitanic2018dynamic} actually hinges on an assumption---stated below as \Cref{ass:duality}---which may not hold in general. Motivated by this, we introduce in this paper a more general class of contracts, parametrised by a function $\psi$ subject to conditions that make the contract revealing for the agent and without loss of generality for the principal. We further provide two natural specifications of $\psi$: one, inspired by the BSDE approach, yielding a forcing-type contract; the other, motivated by the 2BSDE approach, correcting the duality gap when \Cref{ass:duality} is not satisfied.
\end{abstract}

\section{Introduction}\label{sec:intro}

Principal--agent problems originate in the economic literature on optimal incentives under asymmetric information, particularly in the context of moral hazard. The foundational literature was shaped by the influential contributions of \citeauthor{mirrlees1976optimal} in \citeny{mirrlees1976optimal} and \citeny{mirrlees1999theory}, \citeayn{holmstrom1979moral}, and \citeayn{grossman1983analysis}. More precisely, in these models a principal (she) is imperfectly informed about the efforts of an agent (he), and seeks to incentivise him to undertake appropriate actions over a given period. To that end, the principal can design a contract, usually represented by a terminal payment $\xi$ at a finite time horizon $T>0$, and indexed on what she can observe. In particular, in a continuous-time stochastic framework, it is usually assumed that the agent can choose some control process unobservable by the principal, but impacting the drift and/or the volatility of an observable output, represented by a stochastic process $X$. The goal of the principal is therefore to determine an optimal contract, as a function of the trajectories of $X$, in order to maximise her expected utility, and anticipating the agent's optimal response to this contract. It is also commonly assumed that the agent will decline the contract proposed by the principal if his resulting utility falls below a given level, defined as his reservation utility, thus adding a constraint for the principal in her design problem. For classical textbook treatments of principal--agent models, see \citeayn{laffont2002theory}, \citeayn{bolton2005contract} and \citeayn{salanie2005economics}.

\medskip

Originally introduced by \citeayn{holmstrom1987aggregation}, continuous-time versions were later generalised by \citeayn{sannikov2008continuous}. His approach---coined \textit{Sannikov's trick} in the associated literature---consists in using the agent's continuation utility to determine the optimal form of terminal payments, and further consider it as an additional state variable for the principal. Since then, continuous-time contract theory has been the subject of an extensive literature, with overviews provided in the monographs by \citeayn{cvitanic2012contract} and \citeayn{sung2023contract}. In a drift control framework, this continuation utility can naturally be related to the first component of the solution to a backward stochastic differential equation (BSDE) by standard stochastic control theory. When adding the possibility for the agent to impact the volatility of the output process, this continuation utility now has to be related to a second-order backward stochastic differential equation (2BSDE), as shown by \citeayn{cvitanic2018dynamic}. More precisely, in this paper, the relevant form of contracts is shown to be indexed on the trajectory of the output $X$ through a process $Z$ chosen by the principal---as in the classical drift control case---but also on its quadratic variation $[X]$, through an additional process $\Gamma$ also chosen by the principal. Informally, while indexing the contract on the output process $X$ incentivises the agent to undertake some efforts on the drift, the indexation on its quadratic variation encourages the control of the volatility. In the end, they prove that the original principal's optimisation problem boils down to a more standard stochastic control problem, with two state variables $X$ and $\xi$ controlled by the pair $(Z,\Gamma)$. 

\medskip

Very recently, \citeayn{chiusolo2026new} presented an alternative approach to proving the optimality of the contract form introduced in \cite{cvitanic2018dynamic}, within the same general framework. This approach, which is based solely on BSDE theory, consists in defining another optimisation problem for the principal, called `\textit{first-best}', in the sense that there is no moral hazard on the quadratic variation of the output process. More precisely, while in the original problem the agent controls the drift and the volatility of the stochastic process $X$, the alternative one consists in assuming that, since the quadratic variation is observable in continuous-time, it can be directly controlled by the principal, who can in turn force the agent to achieve a specific choice.
Note that the term \textit{first-best} in principal--agent problems commonly refers to a setting without moral hazard, in which the principal directly chooses the agent's controls, as opposed to \textit{second-best}. To avoid confusion, we depart slightly from the terminology in \cite{chiusolo2026new} by using here the term \textit{contractible‑volatility} to designate this alternative problem. There, the principal thus chooses both a contract $\xi$ and the density with respect to the Lebesgue measure of the quadratic variation, later denoted $\Sigma$, to which the agent replies with an effort constrained to achieve the specific density $\Sigma$ fixed by the principal. The advantage is that this alternative problem can be solved by following \textit{Sannikov's trick}, thus relying solely on BSDEs. Then, using the contract form proposed in \cite{cvitanic2018dynamic} in the original problem, it can be shown that the principal actually achieves her \textit{contractible‑volatility} value, which is the maximum possible value, thus ensuring optimality of the contract form introduced in \cite{cvitanic2018dynamic} and equivalence between the original and the alternative problems.

\medskip

However, as highlighted in \cite{chiusolo2026new}, the equivalence between the two formulations---hence the two approaches---holds only under \Cref{ass:duality}. More importantly, the main result of \cite{cvitanic2018dynamic}, namely Theorem 3.6 establishing the optimality of the proposed contract form, may fail when this assumption does not hold. While \Cref{ass:duality} is sufficient to guarantee optimality of the original contract form and is satisfied in most examples considered in the literature, it remains crucial to investigate the existence of optimal contracts beyond this setting, in order to reach a comprehensive understanding of optimal incentives in continuous-time principal--agent problems with volatility control. From an economic perspective, when this assumption is not satisfied, it remains unclear whether the principal can still attain her value in the corresponding \textit{contractible‑volatility} alternative problem introduced in \cite{chiusolo2026new}.

\medskip

We show in this paper that, even when \Cref{ass:duality} does not hold, one can still identify an optimal form of contracts, in the sense that they are revealing for the agent and that restricting the study to this specific form is without loss of generality for the principal. The intuition behind this contract form arises from the 2BSDE methodology developed in \cite{cvitanic2018dynamic}, but the BSDE approach from \cite{chiusolo2026new} can also be used to establish its optimality. In particular, this proves that the \textit{contractible‑volatility} value---in the sense of the \emph{first-best} problem in \cite{chiusolo2026new}---can always be attained. More precisely, our study yields the following main contributions.

\medskip
  $(i)$ \textbf{Where the 2BSDE proof can fail.} We remind one of the main findings of \cite{chiusolo2026new}, namely the exact obstruction in the 2BSDE route of \cite{cvitanic2018dynamic}: in order to identify a process $\Gamma$ satisfying the identity \eqref{eq:repres_Kdot}, \textit{i.e.}
  \[
    \dot{K}_t + F_t\big(X_{\cdot\wedge t},Y_t,Z_t,\widehat{\sigma}_t^2\big)
    =
    H_t\big(X_{\cdot\wedge t},Y_t,Z_t,\Gamma_t\big) -\frac{1}{2}\mathrm{Tr}\big[\Gamma_t\widehat{\sigma}_t^2\big],
  \]
  one needs \emph{both} the equality $F=F^{\star\star}$ (where the Fenchel transform is taken with respect to the squared volatility variable) and \emph{attainment} of the infimum by a measurable selector $\gamma^{\star}$ as in \Cref{ass:duality}. Absent these, the density $\dot{K}$ of the 2BSDEs non-decreasing process cannot always be encoded by some $\Gamma$, and \cite[Theorem~3.6]{cvitanic2018dynamic} may fail.
  
  \medskip
$(ii)$ \textbf{A general $\psi$–parametrised contract class.} We introduce a contract class that specifies the density $\dot{K}$ in the 2BSDE \eqref{eq:agent_2BSDE} as $\dot{K}_t=\psi_t\big(X_{\cdot\wedge t},Y_t,Z_t,\Gamma_t,\widehat{\sigma}_t^2\big)-F_t\big(X_{\cdot\wedge t},Y_t,Z_t,\widehat{\sigma}_t^2\big)$, where for $E$ an arbitrary space,
 \[
     \inf_{S} \{ \psi_t(x,y,z,\gamma,S) - F_t(x,y,z,S) \} = 0, \;
     \min_{\gamma\in\smallertext{E}}\psi_t(x,y,z,\gamma,S)=F_t(x,y,z,S),\;\text{and}\;  \sup_{\gamma\in\smallertext{E}}\psi_t(x,y,z,\gamma,S)=+\infty.
  \]
The first condition normalises payments at the realised volatility so that the contract is \emph{revealing} (see \Cref{prop:agent}), while the second and third ones give the principal full reach over non-negative $\dot{K}$, and yields the \emph{without loss of generality} reduction (see \Cref{thm:principal}). Under \Cref{ass:duality}, $\psi$ can be chosen to recover the classical contracts of \cite{cvitanic2018dynamic}.

 \medskip
 $(iii)$ \textbf{Equality with the \textit{contractible-volatility} problem.} Let $V^{\circ}_{\smallertext{\rm P}}$ be the principal's value in the auxiliary problem where $\Sigma$ (the density of $[X]$) is selected by the principal, and $V_{\smallertext{\rm P}}$ the value in the original problem. \Cref{prop:one-to-one} gives a correspondence between feasible $(\Sigma,Z)$ and $(Z,\Gamma,\psi)$, thus ensuring that $V_{\smallertext{\rm P}}=V^{\circ}_{\smallertext{\rm P}}$, even \emph{without} \Cref{ass:duality}. 

\medskip
  $(iv)$ \textbf{Two canonical, economically transparent contracts.} The contract form we propose has the drawback of being inherently non-unique, since it is written in terms of a function $\psi$ that may be chosen arbitrarily within a very broad class. Nevertheless, we exhibit in \Cref{sec:contract_examples} two concrete and robust contract specifications: \emph{forcing contracts}, obtained by penalising observable quadratic deviations to `target' $\Sigma$; and \emph{duality–corrected contracts}, which add the explicit gap $F^{\star\star}-F$ to the original contract form in \cite{cvitanic2018dynamic}. 

  \medskip
  $(v)$ \textbf{Examples and counterexamples.} We revisit the counterexample from \cite{chiusolo2026new}, and show that our contracts \emph{attain} the \emph{contractible-volatility} value. We also remark that several explicit models in the literature satisfy \Cref{ass:duality}, but provide another counterexample within the framework of \citeayn{cvitanic2017moral}.

\medskip
The structure of the paper is as follows. In \Cref{sec:model_result}, we introduce the general model and then briefly recall the two existing approaches, developed in \cite{chiusolo2026new,cvitanic2018dynamic}. In \Cref{sec:contract_general}, we introduce a more general form of contracts, proving their optimality even when \Cref{ass:duality} is not satisfied. More precisely, \Cref{ss:main_results} states the main results, on the \textit{revealing} property of such contract form in \Cref{prop:agent} and the fact that the restriction to this form is without loss of generality from the principal point of view in \Cref{thm:principal}. The latter is then rigorously proved in \Cref{ss:proof_main}, relying on both approaches. \Cref{sec:contract_examples} introduces two canonical examples of contracts. Finally, in \Cref{sec:examples}, we review some of the existing models from the literature.

\medskip

{\footnotesize {\bf Notations.} Let $\mathbb{N}^\star\coloneqq \mathbb{N}\setminus\{0\}$ be the set of positive integers. For any $n \in \N^\star$, we will denote by $\Sc_n(\R)$ (respectively $\Sc^{\smallertext{+}}_n(\R)$) the set of symmetric (resp. symmetric positive semi-definite) $n \times n$ matrices with real entries. For any matrix $S \in \Sc_{n}^+(\R)$, we will denote by $S^{1/2}$ its $(n,n)$-dimensional symmetric square root, and by $S^{-1/2}$ its pseudo-inverse. For fixed time horizon $T > 0$, representing the maturity of the contract, we let $\Cc ( [0,T], \R^n)$ denote the set of continuous functions from $[0,T]$ to $\R^n$. The set $\Cc( [0,T], \R^n)$ will always be endowed with the topology associated to the uniform convergence on the compact $[0,T]$. For a probability space of the form $\Omega \coloneqq  \Cc ( [0,T], \R^n)$ and an associated filtration $\F$, we will have to consider processes $\psi : [0,T] \times \Omega \longrightarrow E$, for some Polish space $E$, which are $\F$-optional, \textit{i.e.} $\Oc(\F)$-measurable where $\Oc(\F)$ is the so-called optional $\sigma$-field generated by $\F$-adapted right-continuous processes. In particular, such a process $\psi$ is non-anticipative in the sense that $\psi(t,x) = \psi(t,x_{\cdot\wedge t})$, for all $t \in [0,T]$ and $x \in \Omega$. Throughout the paper, and as is customary in the probabilistic literature, we will omit to write explicitly the dependence of such processes on the canonical process $X$, and simply write $\psi_t$ instead of $\psi_t(X_{\cdot\wedge t})$, whenever no confusion can arise.
Furthermore, for any filtration $\F \coloneqq  (\Fc_{\smallertext{T}})_{t \in [0,\smallertext{T}]}$ and probability measure $\P$, we will denote by $\F^\P \coloneqq  (\Fc_{\smallertext{T}}^\P)_{t \in [0,\smallertext{T}]}$ the usual $\P$-completion of $\F$; $\F^\smallertext{+} \coloneqq  (\Fc_{\smallertext{T}}^\smallertext{+})_{t \in [0,\smallertext{T}]}$ the right limit of $\F$; $\F^{\P^\smalltext{+}} \coloneqq  (\Fc_{\smallertext{T}}^{\P^\smalltext{+}})_{t \in [0,\smallertext{T}]}$ the augmentation of $\F$ under $\P$. Finally, for technical reasons, we work under the classical ZFC set-theoretic axiom. This is a standard widely used in classical stochastic analysis theory (see \citeayn[Chapter 0.9]{dellacherie1978probabilities}). We also rely on the continuum hypothesis, which is needed to apply the aggregation result of \citeayn{nutz2012pathwise}.} 

\section{Model and previous results}\label{sec:model_result}

We first recall the framework introduced in \cite{cvitanic2018dynamic,chiusolo2026new}, and then outline their respective 2BSDE- and BSDE-based approaches.

\subsection{General framework}\label{ss:model}

Throughout the paper, fix some positive integer $d \in \N^\star$ and $T >0$ a finite time horizon, and let $X$ be the canonical process on $\Omega \coloneqq  \Cc([0,T],\R^d)$. Define the following process $\widehat \sigma^2$, taking values in $\Sc^{\smallertext{+}}_d(\R)$, 
which coincides with the density (with respect to the Lebesgue measure) of the quadratic variation of $X$,
\begin{align*}
	\widehat \sigma_t^2 \coloneqq  \limsup_{\varepsilon \to 0} \dfrac{[X]_t - [ X ]_{t-\varepsilon}}{\varepsilon}, \; t \in [0,T].
\end{align*}
Given a control process $\beta$ chosen by the agent, taking values in $B$ a subset of some Polish space, the controlled state equation is defined by the following SDE, driven by an $n$-dimensional Brownian motion $W$ for fixed $n \in \N^\star$
\begin{align}\label{eq:dynamic_SDE}
	X_t =x+\int_0^t \sigma_s(X_{\cdot\wedge s}, \beta_s) \big( \lambda_s(X_{\cdot\wedge s}, \beta_s) \drm s + \drm W_s \big), \; t \in [0,T],
\end{align}
where $\lambda : [0,T] \times \Omega \times B \longrightarrow \R^n$ and $\sigma : [0,T] \times \Omega \times B \longrightarrow \R^{d,n}$ are two bounded functions such that, for any $b \in B$, the map $[0,T]\times\Omega\ni(t,x) \longmapsto (\lambda_t(x,b), \sigma_t(x,b)) \in \R^n \times \R^{d,n}$ is $\F$--progressively measurable. When convenient, we will denote by $\mu (t,x,b) \coloneqq \sigma_t(x,b)\lambda_t(x,b)$, $(t,x,b)\in[0,T] \times \Omega \times B$, the drift coefficient of the output process.

\medskip
Admissible controls are then defined as $\F$-predictable processes, taking values in $B$, and ensuring existence of a weak solution to the above SDE. We refer to \cite{cvitanic2018dynamic} for precise details, and note whenever we follow the approach developed in \cite{chiusolo2026new}, we will additionally require weak uniqueness. 
In short, we let $\Bc$ be the set of admissible controls for the agent, and $\Pc$ the corresponding set of admissible probability measures such that, for every $\P\in\Pc$, we can find an $\F$--progressively measurable, $B$-valued process $\beta^\P$, and an $n$-dimensional $\P$--Brownian motion $W^\P$ such that
\begin{align}\label{eq:dynamic_SDE_P}
	X_t =x+\int_0^t \mu_s(X_{\cdot\wedge s}, \beta^\P_s) \drm s + \int_0^t  \sigma_s(X_{\cdot\wedge s}, \beta^\P_s) \drm W^\P_s, \; t \in [0,T].
\end{align}

Given a terminal payment $\xi\in\L^0(\R,\Fc_\smallertext{T})$, namely an $\R$-valued and $\Fc_\smallertext{T}$-measurable random variable, the optimisation problem faced by the agent is defined by
\begin{align}\label{eq:pb_agent}
&V_{\smallertext{\rm A}} (\xi) \coloneqq  \sup_{\P \in \Pc} J_{\smallertext{\rm A}} (\xi,\P), \;
	 \text{with} \;  
	J_{\smallertext{\rm A}} (\xi,\P ) \coloneqq  \E^\P \bigg[ \Kc_\smallertext{T}^\P U( \xi ) - \int_0^\smallertext{T} \Kc_s^\P c_s \big(\beta_s^{\P} \big) \mathrm{d}s \bigg], \; \P \in \Pc,
\end{align}
where
\begin{enumerate}[label=$(\roman*)$]
	\itemsep0em
	\item the Borel-measurable utility function $U : \R \longrightarrow \R$ is invertible, and we will denote by $U^{(\smalltext{-}1)}$ its inverse;
	\item the Borel-measurable cost function $c : [0,T] \times \Cc([0,T], \R^d) \times B \longrightarrow \R$ is such that, for any $b \in B$, the map $[0,T]\times\Omega\ni(t,x) \longmapsto c_t(x,b)\in\R$ is $\F$--progressively measurable, and satisfies the following integrability condition 
	\begin{align}\label{eq:integrability_cost}
		\sup_{\P\in \Pc} \E^\P \bigg[ \int_0^\smallertext{T} \big| c_s \big(\beta^{\P}_s \big) \big|^p \drm s \bigg] < + \infty, \; \text{for some} \; p >1;
	\end{align}
	\item the discount factor $\Kc^\P$ is defined for all $\P \in \Pc$ by
	\[
	\Kc_t^\P \coloneqq \exp\bigg(-\int_0^t k_s \big(\beta_s^\P \big) \drm s \bigg), \; t \in [0,T],
	\]
	where $k : [0,T] \times \Omega \times B \longrightarrow \R$ is assumed to be Borel-measurable, bounded, and such that for any $b \in B$, the map $[0,T]\times\Omega\ni(t,x)\longmapsto k_t(x,b)\in\R$ is $\F$--progressively measurable.
\end{enumerate}
To ensure that the agent's problem \eqref{eq:pb_agent} is well-defined, we require minimal integrability on the contracts as in \cite[Equation  (2.5)]{cvitanic2018dynamic}, but adapted here to our setting with utility function, namely
\begin{align}\label{eq:integrability_contract_agent}
	\sup_{\P\in \Pc} \E^\P \Big[ \big| U (\xi ) \big|^p \Big] < + \infty, \; \text{for some} \; p > 1.
	\tag{${\rm I}^p_{\smallertext{\rm A}}$}
\end{align}
In addition, we assume as usual that the agent has a reservation utility level $R_{\smallertext{\rm A}} \in \R$, which conditions his acceptance of the contract. With this in mind, we consider the following set $\Xi$ of admissible contracts
\begin{align}\label{def:contract_set}
	\Xi \coloneqq  \big\{\xi\in\L^0(\R,\Fc_{\smallertext{T}}):\text{$\xi$ satisfies \eqref{eq:integrability_contract_agent} and} \; V_{\smallertext{\rm A}} (\xi) \geq R_{\smallertext{\rm A}} \big\}.
\end{align}
Finally, we denote by $\Pc^\star(\xi)$ the set of optimal responses of the agent to a contract $\xi \in \Xi$, \textit{i.e.} 
\begin{align}\label{def:optimal_response}
	\Pc^\star(\xi) \coloneqq  \big\{ \P^\star \in \Pc : V_{\smallertext{\rm A}} (\xi) = J_{\smallertext{\rm A}} (\xi,\P^\star) \big\}.
\end{align}
Subsequently, the principal's optimisation problem is defined as follows
\begin{align}\label{eq:pb_principal}
V_{\smallertext{\rm P}} \coloneqq  \sup_{\xi \in \Xi} \sup_{\P^\smalltext{\star} \in \Pc^\smalltext{\star}(\xi)} J_{\smallertext{\rm P}} (\xi,\P^\star), \;
 \text{with} \;&
	J_{\smallertext{\rm P}} (\xi, \P ) \coloneqq  \E^\P \bigg[ \Kc^{\smallertext{\rm P},\P}_\smallertext{T}  U_{\smallertext{\rm P}} ( \xi ) - \int_0^\smallertext{T} \Kc^{\smallertext{\rm P},\P}_s c_s^{\smallertext{\rm P}} \big(\beta_s^{\P} \big) \mathrm{d}s\bigg], \; \xi \in \Xi,\; \P\in \Pc,
\end{align}
where
\begin{enumerate}[label=$(\roman*)$]
	\itemsep0em
	\item $U_{\smallertext{\rm P}} : \Omega\times\R \longrightarrow \R$ is a Borel-measurable utility function;
	\item $c^{\smallertext{\rm P}}:[0,T]\times\Omega\times B\longrightarrow \R$ is Borel-measurable, and the map $[0,T]\times\Omega\ni(t,x)\longmapsto c_t^{\smallertext{\rm P}}(x,b)\in\R$ is $\F$--progressively measurable for any $b \in  B$;
	\item the discount factor $\Kc^{\smallertext{\rm P},\P}$ is defined for all $\P\in\Pc$ by 
		\[
	\Kc^{\smallertext{\rm P},\P}_t  \coloneqq  \exp\bigg(-\int_0^t k_s^{\smallertext{\rm P}} \big(\beta_s^\P \big) \drm s \bigg), \; t \in [0,T],
	\]
	where $k^{\smallertext{\rm P}} : [0,T] \times \Omega \times B \longrightarrow \R$ is assumed to be Borel-measurable, bounded, and such that for any $b \in B$, the map $[0,T]\times\Omega\ni(t,x)\longmapsto k^{\smallertext{\rm P}}_t(x,b)\in\R$ is $\F$--progressively measurable.
\end{enumerate}
Note that, as is customary in principal--agent problems, if there is more than one optimal response for the agent to a given contract, we assume that the principal selects the one that is most favourable to her. This is consistent with the original economic setting considered historically in principal--agent problems, in which the principal suggests an effort when offering the contract, and the agent simply verifies that this effort is indeed optimal for him. Moreover, since by convention the supremum over an empty set is taken to be minus infinity, the principal should always choose a contract such that a best response exists (unless no such contract exists, which leads to a very degenerate problem, and is thus a situation we ignore). 

\begin{remark}
A straightforward generalisation of the above model would allow

\medskip
	$(i)$ the agent’s utility function $U$ to depend on the path of the output process $X_{\cdot\wedge \smallertext{T}}$. No further technical difficulty arises in this case, only invertibility with respect to the $\xi$-variable should be required$;$
	
	\medskip
$(ii)$ the principal to have her own control process $\alpha \in \Ac$, namely an $\F$-predictable $A$-valued process,  for some subset $A$ of a Polish space. This control may represent, for example, inter-temporal payments to the agent, or efforts that the principal herself can exert on the output process $X$, and can appear in all the above defined maps. The contract should then be understood as the pair $(\alpha,\xi)$, meaning that the principal also announces her control at the beginning of the contracting period, see for example {\rm \citeayn{hernandez2026closed}}. This modification mainly leads to more cumbersome notations, but no additional technical difficulty.
\end{remark}

\subsection{Previous results}\label{ss:results}

As already mentioned in the introduction, there exist two approaches to solve principal--agent problems with volatility control. In this section, we briefly describe these two methods, which will later be used to introduce a new form of contracts and verify its optimality.  For both approaches in \cite{cvitanic2018dynamic,chiusolo2026new}, we first define the sets of attainable squared diffusion coefficients and corresponding constrained controls, for $(t,x) \in [0,T] \times \Omega$
\begin{align*}
	{\bf \Sigma}_t(x) \coloneqq  \big\{ [\sigma \sigma^\top]_t(x,b) \in \Sc^{\smallertext{+}}_d(\R) : b \in B \big\}, \text{ and } B_t(x,S) \coloneqq  \big\{ b \in B : [\sigma \sigma^\top]_t(x,b) = S \big\},
	\; S\in{\bf \Sigma}_t(x).
\end{align*}
We then introduce, for any $(t,x,y,z,\gamma) \in [0,T] \times \Omega \times \R \times \R^d \times \Sc_d(\R)$, the agent's Hamiltonian
\begin{align}\label{def_hamiltonian_CPT}
	H_t(x, y, z, \gamma) \coloneqq  \sup_{b \in \smallertext{B}}  h_t (x, y, z, \gamma,b), \;
	\text{with}\;
	h_t (x, y, z, \gamma,b) \coloneqq  f_t (x,y,z,b) + \dfrac12 {\rm Tr} \big[ \gamma [ \sigma \sigma^\top]_t(x,b) \big], \; b \in B,
\end{align} 
as well as its `constrained' version to a specific attainable diffusion $S\in{\bf \Sigma}_t(x)$
\begin{align}\label{eq:hamiltonian_constrained}
	F_t(x,y,z,S) &\coloneqq  \sup_{b \in \smallertext{B}_\smalltext{t}(x,\smallertext{S})} f_t (x,y,z,b),
	\; \mbox{with}\; f_t (x,y,z,b) \coloneqq  \mu_t(x, b) \cdot z - c_t(x,b) - k_t(x, b) y,
	\; b \in B.
\end{align}
We note the following relationship between $H$ and $F$, namely that $2H$ is the Fenchel conjugate of $-2F$,
\begin{align}\label{eq:link_hamiltonian}
	H_t(x,y,z,\gamma) = \sup_{\smallertext{S} \in \smallertext{{\bf \Sigma}}_\smalltext{t}(x)} \bigg\{ F_t(x,y,z,S) + \dfrac12 {\rm Tr} [ S \gamma] \bigg\}.
\end{align}

\subsubsection{The original approach}\label{sss:2BSDE}

Following the approach in \cite{cvitanic2018dynamic}, we are led to consider terminal payments of the form $\xi = U^{(\smalltext{-}1)} \big(Y_\smallertext{T}^{y_\smalltext{0},\smallertext{Z},\smallertext{\Gamma}}\big)$, where the process $Y^{y_\smalltext{0},\smallertext{Z},\smallertext{\Gamma}}$ is defined as the solution to the following SDE
\begin{align}\label{eq:contract_CPT}
	Y_t^{y_\smalltext{0} ,\smallertext{Z},\smallertext{\Gamma}} = y_0 - \int_0^t H_s \big(Y^{y_\smalltext{0} ,\smallertext{Z},\smallertext{\Gamma}}_s, Z_s, \Gamma_s \big) \drm s + \int_0^t Z_s \cdot \drm X_s + \dfrac12 \int_0^t {\rm Tr} \big[ \Gamma_s \drm [ X ]_s \big], \; \P\textnormal{--a.s.}, \;  \P \in \Pc,
\end{align}
for a constant $y_0 \in \R$ and a pair $(Z,\Gamma)$ of processes satisfying appropriate integrability conditions. More precisely, for any $\F$-predictable, $\R^d$-valued process $Z$ and any $\F$-adapted, c\`adl\`ag, $\R$-valued process $Y$, we consider the norms
\begin{align}\label{eq:norm}
	\| Z \|^p_{\H^\smalltext{p}} \coloneqq  \sup_{\P \in \Pc} \E^\P \Bigg[ \bigg( \int_0^\smallertext{T}  Z_t^\top \widehat \sigma^2_t Z_t \drm t \bigg)^{p/2} \Bigg],
	\;
	\| Y \|^p_{\D^\smalltext{p}} \coloneqq  \sup_{\P \in \Pc} \E^\P \bigg[ \sup_{t \in [0,\smallertext{T}]} \big| Y_t \big|^p \bigg].
\end{align}
We can then define the set $\Vc$ of appropriate processes $(Z,\Gamma)$ as in \cite[Definition 3.2]{cvitanic2018dynamic}, recalled below.

\begin{definition}\label{def:contrat_vol}
	Let $y_0 \in \R$. For any $\F$-predictable processes $(Z,\Gamma)$ taking values in $\R^d \times \Sc_d(\R)$, define the process $Y^{y_\smalltext{0} ,\smallertext{Z},\smallertext{\Gamma}}$ using \eqref{eq:contract_CPT}. We will write $(Z,\Gamma) \in \Vc$ if moreover
	\begin{enumerate}[label=$(\roman*)$]
		\itemsep0em
		\item $\| Z \|_{\H^\smalltext{p}} < \infty$ and $\| Y^{y_\smalltext{0} ,\smallertext{Z},\smallertext{\Gamma}} \|_{\D^\smalltext{p}} < \infty$ for some $p > 1;$
		\item there exists $\P \in \Pc$ such that $H_t \big(Y_t^{y_\smalltext{0} ,\smallertext{Z},\smallertext{\Gamma}}, Z_t, \Gamma_t\big) = h_t \big(Y_t^{y_\smalltext{0},\smallertext{Z},\smallertext{\Gamma}}, Z_t, \Gamma_t,\beta^\P_t \big), \; \drm t \otimes \P\text{\rm --a.e. on } [0,T] \times \Omega$.
	\end{enumerate}
\end{definition}

It is straightforward to show that any $\xi = U^{(\smalltext{-}1)} \big( Y_\smallertext{T}^{y_\smalltext{0} ,\smallertext{Z},\smallertext{\Gamma}}\big)$, with $(y_0,Z,\Gamma) \in \R \times \Vc$, is admissible (see \cite[Proposition 3.3]{cvitanic2018dynamic}), and that the agent's optimal response can be characterised as the maximisers of $H$; but the proof of \cite[Theorem 3.6]{cvitanic2018dynamic} may fail if the following assumption is not satisfied (see \Cref{rk:strong_duality}), as shown in \cite{chiusolo2026new}.

\begin{assumption}\label{ass:duality}
	There exists a Borel-measurable function $\gamma^\star : [0,T] \times \Omega \times \R \times \R^d \times\Sc^{\smallertext{+}}_d(\R) \longrightarrow \Sc_d(\R)$ such that for all $(t,\vartheta) \in [0,T] \times \Omega \times \R \times \R^d$ and $S \in {\bf \Sigma}_t(x)$, we have $F_t (\vartheta,S) = H_t  (\vartheta, \gamma_t^\star(\vartheta,S) ) - \frac12 {\rm Tr} [ \gamma_t^\star(\vartheta,S) S ].$
\end{assumption}

\subsubsection{The alternative approach}\label{sss:BSDE}

The approach recently developed in \cite{chiusolo2026new} consists in letting the principal directly controls the density of the quadratic variation, in addition to choosing a terminal payment. More precisely, let
\begin{align}\label{eq:set_qvar}
	\Sc \coloneqq  \big\{\Sigma:[0,T]\times\Omega\longrightarrow \Sc^{\smallertext{+}}_d(\R) :\Sigma \;\text{is}\; \F\text{--progressively measurable}\big\}.
\end{align}
For fixed $\Sigma \in \Sc$, the agent's admissible efforts are now limited to those that achieve the desired quadratic variation. 
\begin{align*}
	\Bc^\circ(\Sigma,\P) &\coloneqq  \big\{\beta \in \Bc  :  \beta_t \in B_t(\Sigma_t),\; \mathrm{d}t\otimes\P\text{\rm--a.e.}\big\}, \; \text{and} \; 
	\Pc^\circ(\Sigma) \coloneqq  \big\{ \P \in \Pc  :  [\sigma \sigma^\top ]_t (\beta_t^\P) = \Sigma_t, \; \mathrm{d}t\otimes\P\text{--a.e.}\big\}.
\end{align*}
As already mentioned, for this approach we actually need to restrict the set of admissible controls to those inducing a unique weak solution to the original SDE \eqref{eq:dynamic_SDE}. We refer to \cite[Assumption 2.2]{chiusolo2026new} and related remarks for a rigorous statement and further justifications. Given $\xi \in \L^0(\Fc_{\smallertext{T}},\R)$ and $\Sigma \in \Sc$, the optimisation problem faced by the agent is
\begin{align*}
	V^\circ_{\smallertext{\rm A}} (\xi,\Sigma) \coloneqq  \sup_{\P \in \Pc^\smalltext{\circ}(\smallertext{\Sigma})} J_{\smallertext{\rm A}} (\xi,\P).
\end{align*}
In parallel to \eqref{def:contract_set} and \eqref{def:optimal_response}, 
we define the set $\Xi^\circ$ of admissible contracts as
\begin{align*}
	\Xi^\circ \coloneqq  \big\{(\xi, \Sigma) \in\L^0(\Fc_\smallertext{T},\R) \times \Sc :\text{$\xi$ satisfies \eqref{eq:integrability_contract_agent} and} \; V^\circ_{\smallertext{\rm A}} (\xi,\Sigma) \geq R_{\smallertext{\rm A}} \big\},
\end{align*}
and denote by $\Pc^{\circ,\star}(\xi,\Sigma)$ the set of agent's optimal response to a contract, here a pair $(\xi, \Sigma) \in \Xi^\circ$.
Then, the goal of the principal is to maximise her objective function by choosing $(\xi, \Sigma) \in \Xi^\circ$, anticipating the agent's optimal response
\begin{align}\label{eq:pb_principal_FB}
	V_{\smallertext{\rm P}}^\circ \coloneqq  \sup_{(\xi, \smallertext{\Sigma}) \in \smallertext{\Xi}^\circ}  \sup_{\P \in \Pc^{\smalltext{\circ}\smalltext{,}\smalltext{\star}}(\xi,\smallertext{\Sigma})} J_{\smallertext{\rm P}} (\xi,\P).
\end{align}

This auxiliary problem is not meant to be economically realistic; rather, it is introduced as a tractable benchmark: its value is easier to characterise than that of the original problem, and we then show that the two values coincide.
More precisely, it is first proved in \cite[Lemma 3.5]{chiusolo2026new} that $V_{\smallertext{\rm P}}^\circ \geq V_{\smallertext{\rm P}}$, namely $V_{\smallertext{\rm P}}^\circ$ defined above is the best possible value for the principal, if achievable. Then, the above problem can be solved using BSDEs. Indeed, for this alternative problem, \cite[Theorem 3.10]{chiusolo2026new} states that considering terminal payments of the form $\xi = U^{(\smalltext{-}1)} (Y_\smallertext{T}^{y_\smalltext{0} ,\smallertext{Z},\circ})$, with 
\begin{align}\label{eq:contract_FB}
	Y_t^{y_\smalltext{0} ,\smallertext{Z},\circ} = y_0 - \int_0^t F_s\big(Y^{y_\smalltext{0} ,\smallertext{Z},\circ}_s, Z_s,\Sigma_s \big) \drm s + \int_0^t Z_s \cdot \drm X_s, \; \P\textnormal{--a.s.}, \; \text{for all} \; \P \in \Pc^\circ (\Sigma),
\end{align}
is without loss of generality from the principal's point of view. 
\begin{definition}\label{def:contract_FB}
	Let $y_0 \in \R$. For any $\Sigma \in \Sc$ and any $\F$-predictable processes $Z$ taking values in $\R^d$, define the process $Y^{y_\smalltext{0} ,\smallertext{Z},\circ}$ using \eqref{eq:contract_FB}. We will denote $(Z,\Sigma) \in \Vc^\circ$ if moreover
	\begin{enumerate}[label=$(\roman*)$]
		\item $\| Z \|^p_{\H^\smalltext{p}} < \infty$ and $\| Y^{y_\smalltext{0} ,\smallertext{Z},\circ} \|^p_{\D^\smalltext{p}} < \infty$ for some $p > 1;$
		\item \label{item2_def_contract} there exists $\P \in \Pc^\circ(\Sigma)$ such that $F_t \big(Y_t^{y_\smalltext{0} ,\smallertext{Z},\circ}, Z_t, \Sigma_t \big) = f_t \big(Y_t^{y_\smalltext{0} ,\smallertext{Z},\circ}, Z_t, \beta_t^{\P} \big)$, $\drm t \otimes \P$--{\rm a.e.} on $[0,T] \times \Omega$.
	\end{enumerate}
\end{definition}
Then, \cite[Proposition 3.9]{chiusolo2026new} establishes that the agent's optimal response to such contract $(\xi,\Sigma)$ is given by a maximiser of the constrained Hamiltonian. More precisely, the agent's optimal effort can be represented by $\mathfrak{b}^\circ_\cdot \coloneqq  b^\circ_\cdot( Y_\cdot^{y_\smalltext{0},\smallertext{Z},\circ}, Z_\cdot,\Sigma_\cdot)$, where the function $b^\circ$ is defined by
\begin{align}\label{eq:u*_FB}
	b^\circ_t (x,y,z,S) \in \argmax_{b \in \smallertext{B}_\smalltext{t}(x,\smallertext{S})} f_t ( x, y, z,b), \; (t,x,y,z,S) \in [0,T] \times \Omega \times \R \times \R^d \times \Sc^{\smallertext{+}}_{d}(\R).
\end{align} 
We further define $\Pc^{\circ,\star} (Z,\Sigma) \coloneqq \Pc^{\circ,\star} \big( U^{(\smalltext{-}1)} \big(Y_\smallertext{T}^{y_\smalltext{0},\smallertext{Z},\circ} \big), \Sigma \big).$ To summarise, we obtained
\begin{align}\label{eq:FB_solution}
	V_{\smallertext{\rm P}}^\circ = \sup_{y_\smalltext{0} \geq \smallertext{R}_{\smalltext{\rm A}}}  \sup_{(\smallertext{Z}, \smallertext{\Sigma}) \in \Vc^\circ}  \sup_{\P \in \Pc^{\smalltext{\circ}\smalltext{,}\smalltext{\star}}(\smallertext{Z},\smallertext{\Sigma})} J_{\smallertext{\rm P}} \Big( U^{(\smalltext{-}1)} \big(Y_\smallertext{T}^{y_\smalltext{0} ,\smallertext{Z},\circ} \big),\P \Big).
\end{align}
More importantly, \cite[Theorem 3.12]{chiusolo2026new} proves that this value can be achieved using contracts of the form \eqref{eq:contract_CPT}, under \Cref{ass:duality}. Subsequently, this provides an alternative BSDE-based approach to prove optimality of the contract form \eqref{eq:contract_CPT}. However, since the two aforementioned approaches may fail when \Cref{ass:duality} is not satisfied, it is crucial to investigate the existence of optimal contracts beyond this setting. This is the goal of next section.

\section{General optimal contracts}\label{sec:contract_general}

We first use the 2BSDE characterisation of the agent's continuation utility to introduce a general form of contracts. We then derive appropriate conditions ensuring that the contract form is \emph{revealing} for the agent and \emph{without loss of generality} for the principal. These results are given in \Cref{prop:agent} and \Cref{thm:principal}. The proof of \Cref{prop:agent} follows classical arguments, but is included here for the sake of completeness. The proof of \Cref{thm:principal} relies on both approaches briefly described in the previous sections, and is detailed in \Cref{sss:2BSDE_proof,sss:BSDE_proof} respectively.

\subsection{Deriving a general form of contracts}\label{ss:general_contracts}

As established in \cite[Proposition 4.6]{cvitanic2018dynamic}, for a fixed contract $\xi \in \Xi$ in the original problem, the agent's continuation utility $Y$ is the first component of the solution $(Y,Z,K)$ to the 2BSDE 
\begin{align}\label{eq:agent_2BSDE}
	Y_t = {U} ( \xi) + \int_t^T F_s\big(Y_s, Z_s,\widehat \sigma^2_s \big) \drm s - \int_t^T Z_s \cdot \drm X_s + \int_t^T \drm K_s, \; \P\textnormal{--a.s.}, \; \text{for all} \; \P \in \Pc,
\end{align}
where $K$ is a non-decreasing process starting from $K_0=0$, see \cite[Definition 4.4]{cvitanic2018dynamic}. 
Adapting the arguments in the proof of Theorem 3.6 in \cite[Section 4.6]{cvitanic2018dynamic}, as detailed later in \Cref{sss:2BSDE_proof} for completeness, we can assume without loss of generality that the non-decreasing process $K$ in the 2BSDE is absolutely continuous with respect to the Lebesgue measure, with non-negative density denoted $\dot K$. This leads to $\xi = U^{(\smalltext{-}1)}(Y_\smallertext{T})$ for the terminal payment, with
\begin{align}\label{eq:xi_2BSDE}
	Y_t = y_0 - \int_0^t \big( F_s \big(Y_s, Z_s, \widehat \sigma^2_s \big) + \dot K_s \big) \drm s + \int_0^t Z_s \cdot \drm X_s,\; t\in[0,T], \; \P\textnormal{--a.s.}, \; \text{for all} \; \P \in \Pc.
\end{align}
From there, the idea is to find a suitable representation for the density $\dot K$, leading to a family of parametric contracts $(\xi^\theta)_{\theta \in \Theta}$, or an appropriate parameter set $\Theta$ such that each contract $\xi^\theta$ is
\begin{itemize}
	\item[$(\Rc)$]\hypertarget{prop:revealing}{} \emph{revealing}, in the sense that, for every $\theta \in \Theta$, $\xi^\theta$ is admissible, $V_{\smallertext{\rm A}}(\xi^\theta) = y_0 \ge R_\smallertext{A}$, and there exists a process $\mathfrak{b}^{\star,\theta} \in \Bc$, depending on $\theta$ and the observable states, such that the optimal responses of the agent are exactly the $\P^\star \in \Pc$ satisfying $\beta^{\P^\star}_t = \mathfrak{b}^{\star,\theta}_t$, $\drm t \otimes \P^\star$--a.e.;
	\item[$(\Wc)$]\hypertarget{prop:wlog}{} \emph{without loss of generality} from the principal's point of view, in the sense that restricting her optimisation to the sub-family $\{\xi^\theta\}_{\theta \in \Theta} \subseteq \Xi$ leaves her value unchanged, namely
	\begin{align*}
		V_{\smallertext{\rm P}} = \sup_{\theta \in \smallertext{\Theta}} \sup_{\P^\smalltext{\star} \in \Pc^\smalltext{\star}(\xi^\smalltext{\theta})} J_{\smallertext{\rm P}}(\xi^\theta, \P^\star).
	\end{align*}
\end{itemize}
We will see below that we can take the parameter of the form $\theta = (y_0, Z, \Gamma, \psi)$, where $\psi$ belongs to the class $\Psi$ of \Cref{def:relevant_psi}: it satisfies \Cref{cond:revealing}, ensuring property~\hyperlink{prop:revealing}{$(\Rc)$} (see \Cref{prop:agent}), together with \Cref{cond:wlog1} and either \ref{thm:cond_a} or \ref{thm:cond_b} of that definition, ensuring property~\hyperlink{prop:wlog}{$(\Wc)$} (see \Cref{thm:principal}). 

\medskip

First, in order for the contract to be implementable, the process $\dot K$ can only depend on what is observable by the principal. We thus assume
\begin{align}\label{eq:representation_Kdot}
	\dot K_t = \psi_t \big(Y_t,Z_t,\Gamma_t,\widehat \sigma^2_t\big) - F_t \big(Y_t, Z_t, \widehat \sigma^2_t \big), \; t \in [0,T],
\end{align}
for some function $\psi : [0,T] \times \Omega \times \R \times \R^d \times E \times \Sc^{\smallertext{+}}_d(\R) \longrightarrow \R$, where $E$ is an arbitrary space. Similarly to \eqref{eq:contract_CPT} and \Cref{def:contrat_vol}, we consider the process $Y^{y_\smalltext{0},\smallertext{Z},\smallertext{\Gamma},\psi}$, defined as a solution to the following SDE for $t\in [0,T]$
\begin{align}\label{eq:xi_2BSDE_psi}
\begin{split}
	Y^{y_\smalltext{0},Z,\Gamma,\psi}_t &= y_0 - \int_0^t \psi_s \big(Y^{y_\smalltext{0},\smallertext{Z},\smallertext{\Gamma},\psi}_s,Z_s,\Gamma_s,\widehat \sigma^2_s\big) \drm s 
	+ \int_0^t Z_s \cdot \drm X_s, \; \P\textnormal{--a.s.},
\end{split}
\end{align}
for a constant $y_0 \in \R$ and a pair $(Z,\Gamma) \in \Vc^\psi$ defined below. To ensure that the above SDE has a unique strong solution, we will further require that the function $\psi$ is Lipschitz-continuous in the $y$-variable, uniformly in the others, and denote by $\Oc $ the set of such Borel-measurable functions $\psi : [0,T] \times \Omega \times \R \times \R^d \times E \times \Sc^{\smallertext{+}}_d(\R) \longrightarrow \R$.

\begin{definition}\label{def:contrat_new}
	Let $\psi \in \Oc $ and $y_0 \in \R$. For any $\F$-predictable processes $(Z,\Gamma)$ taking values in $\R^d \times E$, define the process $Y^{y_\smalltext{0},\smallertext{Z},\smallertext{\Gamma},\psi}$ using \eqref{eq:xi_2BSDE_psi}. We will write $(Z,\Gamma) \in \Vc^\psi$ if moreover
	\begin{enumerate}[label=$(\roman*)$]
		\itemsep0em
		\item for some $p > 1$, $\| Z \|_{\H^\smalltext{p}} < \infty$, $\| Y^{y_\smalltext{0},\smallertext{Z},\smallertext{\Gamma},\psi} \|_{\D^{\smalltext{p}}} < \infty$ and
		\begin{align*}
			\sup_{\P \in \Pc} \E^\P \Bigg[ \bigg( \int_0^\smallertext{T}   \psi_s \big(0,Z_s,\Gamma_s,\widehat \sigma^2_s \big) \drm s \bigg)^{p} \Bigg] < + \infty;
		\end{align*}
		\item there exists $\P \in \Pc$ such that
		\begin{align}\label{eq:def-optimal}
			\psi_t \big(Y^{y_\smalltext{0},\smallertext{Z},\smallertext{\Gamma},\psi}_t,Z_t,\Gamma_t,\widehat \sigma^2_t\big) = F_t \big(Y_t^{y_\smalltext{0}, \smallertext{Z},\smallertext{\Gamma},\psi}, Z_t,\widehat \sigma^2_t \big) = f_t \big(Y_t^{y_\smalltext{0}, \smallertext{Z},\smallertext{\Gamma},\psi}, Z_t, \beta^\P_t \big), \; \drm t \otimes \P\text{\rm --a.e. on } [0,T] \times \Omega.
		\end{align}
	\end{enumerate}
\end{definition}

Overall, we will consider the following class of contracts.
\begin{definition}[Relevant class of contracts]\label{def:relevant_contracts}
	A contract $\xi$ belongs to the class of relevant contracts, denoted $\Cc$, if there exist a function $\psi \in \Oc $, a constant $y_0 \in \R$ and a pair of processes $(Z,\Gamma) \in \Vc^\psi$ such that $\xi = U^{(\smalltext{-}1)} (Y_\smallertext{T}^{y_{\smalltext{0}},\smallertext{Z},\smallertext{\Gamma},\psi})$, for $Y^{y_{\smalltext{0}}, \smallertext{Z},\smallertext{\Gamma},\psi}$ defined as in \eqref{eq:xi_2BSDE_psi}. We denote by $\Pc_\psi^\star (Z,\Gamma)$ the set of agent's best response to such contract.
\end{definition}

\subsection{Main results}\label{ss:main_results}

Given a relevant contract $\xi \in \Cc$ as defined above, thus parametrised by a function $\psi \in \Oc $, a constant $y \geq R_{\smallertext{\rm A}}$, and a pair $(Z,\Gamma) \in \Vc^\psi$, one can show that if the function $\psi$ satisfies in addition \Cref{cond:revealing} below, then the contract $\xi$ is admissible and one can easily compute the agent's optimal response, so that the revealing property \hyperlink{prop:revealing}{$(\Rc)$} is satisfied. 
\begin{proposition}[Agent's optimal response]\label{prop:agent}
	Let $\psi \in \Oc $, $y_0 \geq R_{\smallertext{\rm A}}$ and $(Z,\Gamma) \in \Vc^\psi$, and consider the associated contract $\xi \in \Cc$, in the sense of {\rm \Cref{def:relevant_contracts}}. If the map $\psi$ satisfies
	\begin{align}\label{cond:revealing}
		\inf_{\smallertext{S} \in \smallertext{{\bf \Sigma}}_\smalltext{t}(x)} \big\{ \psi_t(x,y,z,\gamma,S) - F_t(x,y,z,S) \big\} = 0, 
		\; \text{\rm for all} \; (t,x,y,z,\gamma) \in [0,T] \times \Omega \times \R \times \R^d \times E,
		\tag{${\rm C}_{\smallertext{\Rc}}$}
	\end{align}
	then $\xi \in \Xi$, $V_{\smallertext{\rm A}} (\xi) = y_0 \geq R_{\smallertext{\rm A}}$, and all agent's optimal responses take the form $\mathfrak{b}^{\star,\psi}_\cdot \coloneqq b^{\star,\psi}_\cdot(Y^{y_\smalltext{0},\smallertext{Z},\smallertext{\Gamma},\psi}_\cdot,Z_\cdot,\Gamma_\cdot)$, where $b^{\star,\psi}$ satisfies for all $(t,x,y,z,\gamma) \in [0,T] \times \Omega \times \R \times \R^d \times E$
	\begin{align}\label{eq:optimum}
		b^{\star,\psi}_t(x,y,z,\gamma) \in \argmax_{b \in \smallertext{B}_\smalltext{t}(x,\smallertext{S}^{\smalltext{\star}\smalltext{,}\smalltext{\psi}}_\smalltext{t}(x,y,z,\gamma))}  f_t (x,y,z,b), \; S^{\star,\psi}_t(x,y,z,\gamma) \in \argmin_{\smallertext{S} \in \smallertext{{\bf \Sigma}}_\smalltext{t}(x)} \big\{ \psi_t(x,y,z,\gamma,S) - F_t(x,y,z,S) \big\} .
	\end{align}
	For later reference, we call $\mathfrak B^{\star,\psi}$ the set of all such functions $b^{\star,\psi}$.
\end{proposition}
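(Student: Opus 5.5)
The plan is the classical verification argument: a comparison between the agent's value under an arbitrary $\P\in\Pc$ and the initial value $y_0$, obtained from the dynamics \eqref{eq:xi_2BSDE_psi}. For brevity write $Y\coloneqq Y^{y_\smalltext{0},\smallertext{Z},\smallertext{\Gamma},\psi}$.

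\emph{Step 1: express $J_{\smallertext{\rm A}}$ through $Y$.} Fix $\P\in\Pc$ with associated $(\beta^\P,W^\P)$. Since $U(\xi)=Y_\smallertext{T}$, apply It\^o's formula to $\Kc^\P_t Y_t$ under $\P$, using $\drm X_t=\mu_t(\beta^\P_t)\drm t+\sigma_t(\beta^\P_t)\drm W^\P_t$. Noting that $\mu\cdot z-c-ky=f$, this gives
\begin{align*}
\Kc^\P_\smallertext{T} U(\xi)-\int_0^\smallertext{T}\Kc^\P_s c_s(\beta^\P_s)\drm s
= y_0+\int_0^\smallertext{T}\Kc^\P_s\Big(f_s(Y_s,Z_s,\beta^\P_s)-\psi_s\big(Y_s,Z_s,\Gamma_s,\widehat\sigma^2_s\big)\Big)\drm s+\int_0^\smallertext{T}\Kc^\P_s Z_s\cdot\sigma_s(\beta^\P_s)\drm W^\P_s .
\end{align*}
The stochastic integral is a true martingale by the Burkholder--Davis--Gundy inequality. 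Indeed, $\Kc^\P$ is bounded because $k$ is bounded, and $\|Z\|_{\H^\smalltext{p}}<\infty$ with $p>1$. Likewise, $Y$ and the $\psi$-term are integrable by \Cref{def:contrat_new}$(i)$ together with the Lipschitz property of $\psi$ in $y$. This same integrability, combined with \eqref{eq:integrability_cost}, yields \eqref{eq:integrability_contract_agent} for $U(\xi)=Y_\smallertext{T}$.

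\emph{Step 2: compare with $y_0$ pointwise.} Under $\P$ we have $\widehat\sigma^2_t=[\sigma\sigma^\top]_t(\beta^\P_t)\in{\bf\Sigma}_t$, $\drm t\otimes\P$--a.e. Hence $\beta^\P_t\in B_t(\widehat\sigma^2_t)$, and therefore $f_t(Y_t,Z_t,\beta^\P_t)\le F_t(Y_t,Z_t,\widehat\sigma^2_t)$. By \eqref{cond:revealing}, $F_t(\cdot,S)\le\psi_t(\cdot,\gamma,S)$ for every $S\in{\bf\Sigma}_t$. Combining the two, the integrand $f-\psi$ is non-positive, so $J_{\smallertext{\rm A}}(\xi,\P)\le y_0$ for all $\P$. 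Equality holds if and only if, $\drm t\otimes\P$--a.e., two things are true: $\widehat\sigma^2_t$ attains the infimum in \eqref{cond:revealing}, that is, $\widehat\sigma^2_t\in\argmin_S\{\psi_t-F_t\}$; and $\beta^\P_t$ attains $F_t$ on $B_t(\widehat\sigma^2_t)$. Since $\Kc^\P>0$, this is exactly the requirement that $\beta^\P_t=b^{\star,\psi}_t(Y_t,Z_t,\Gamma_t)$ for some selection $b^{\star,\psi}\in\mathfrak B^{\star,\psi}$ as in \eqref{eq:optimum}. The key observation is $S^{\star,\psi}=\widehat\sigma^2=[\sigma\sigma^\top](\beta^\P)$, which is consistent by construction because $b^{\star,\psi}\in B_t(S^{\star,\psi})$.

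\emph{Step 3: attainment.} \Cref{def:contrat_new}$(ii)$ provides some $\P\in\Pc$ for which both equalities in \eqref{eq:def-optimal} hold. For this $\P$ the integrand vanishes, so $V_{\smallertext{\rm A}}(\xi)=y_0\ge R_{\smallertext{\rm A}}$ and $\xi\in\Xi$. By Step 2, the optimal responses are precisely those $\P$ achieving the pointwise equalities, which are the ones of the stated form.

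The main obstacle I expect is the measurability and selection issue in Step 2. The equality characterisation requires a measurable selector $S^{\star,\psi}$ and $b^{\star,\psi}$; the argmin in \eqref{eq:optimum} need not be a singleton, and may be empty at points off the optimal path. I would handle this by defining $b^{\star,\psi}$ along the realised path, as the function equal to $\beta^\P$ wherever the equalities hold. This is legitimate because the statement only asks that optimal responses belong to the set $\mathfrak B^{\star,\psi}$ of such maximisers, rather than requiring a globally defined Borel selector. A secondary technical point is ensuring that the martingale property holds uniformly over $\Pc$ under the $\sup_\P$ norms, which is routine.
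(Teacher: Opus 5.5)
Your proposal is correct and follows essentially the same route as the paper's proof in the appendix. Both apply It\^o's formula to $\Kc^\P Y$ to write $J_{\rm A}(\xi,\P)$ as $y_0$ plus the expected discounted integral of $f-\psi$, use the pointwise bound $f\le F\le\psi$ from the definition of $F$ and \eqref{cond:revealing}, and obtain attainment from \Cref{def:contrat_new}$(ii)$. Your Step~2 is slightly more explicit than the paper's display \eqref{eq:optimal_effort}, since you spell out that $\widehat\sigma^2_t=[\sigma\sigma^\top]_t(\beta^\P_t)$ under $\P$, which is what links the argmin in $S$ to the agent's chosen effort.
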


The previous result echoes \cite[Proposition 3.3]{cvitanic2018dynamic} for the contract form \eqref{eq:contract_CPT} and \cite[Proposition 3.9]{chiusolo2026new} for  \eqref{eq:contract_FB}. The proof follows standard arguments and is therefore postponed to \Cref{sec:app}.

\begin{remark}\label{rk1:psi=0}
	We note that if the map $\psi$ is taken to be identically equal to $F$, the maximisation \eqref{eq:optimal_effort} boils down to the maximisation over $S \in {\bf \Sigma}_{t}(x)$ of the following quantity $\sup_{b \in \smallertext{B}_\smalltext{t}(x,\smallertext{S})} \big\{ f_t(x,y,z,b) \big\} - F_t(x,y,z,S),$ which is always $0$ by definition of $F$. In other words, when taking $\psi \equiv F$ in the contract, the agent is actually indifferent between any achievable quadratic variations. In the end, we will note in {\rm \Cref{rk2:psi=0}} that since the principal can choose among the agent's optimal responses, she will be able to select any desired achievable quadratic variation.
\end{remark}

The next task is to ensure that the contract form introduced through \eqref{eq:xi_2BSDE_psi} satisfies Property \hyperlink{prop:wlog}{$(\Wc)$}. More precisely, for a fixed parametrisation function $\psi \in \Oc $ satisfying \Cref{cond:revealing}, the principal's value function is given by
\begin{align}\label{eq:pb_principal_standard}
	\overline V^\psi_{\smallertext{\rm P}} \coloneqq  \sup_{y_\smalltext{0} \geq R_{\smalltext{\rm A}}} \widetilde V^\psi_{\smallertext{\rm P}} (y_0), \; \text{\rm with} \; 
	\widetilde V^\psi_{\smallertext{\rm P}} (y_0) \coloneqq  \sup_{(\smallertext{Z},\smallertext{\Gamma}) \in \smallertext{\Vc}^\smalltext{\psi}} \sup_{\P \in \Pc^\smalltext{\star}_\smalltext{\psi} (\smallertext{Z},\smallertext{\Gamma})} J_{\smallertext{\rm P}} \Big( U^{(\smalltext{-}1)}\big (Y_\smallertext{T}^{y_{\smalltext{0}},\smallertext{Z},\smallertext{\Gamma},\psi}\big), \P \Big).
\end{align}
As proved in \Cref{prop:agent}, any contract $\xi \in \Cc$ belongs to the original set of admissible contracts $\Xi$, thus ensuring that for all $\psi \in \Oc$, the value $\overline V^\psi_{\smallertext{\rm P}}$ defined above is lower than the value $V_{\smallertext{\rm P}}$. Moreover, under weak uniqueness, it has been noticed in \cite[Lemma 3.5]{chiusolo2026new} that the latter is smaller than the \textit{contractible-volatility} value, hence $V^\circ_{\smallertext{\rm P}} \geq V_{\smallertext{\rm P}} \geq \overline V^\psi_{\smallertext{\rm P}}$. While \cite{chiusolo2026new} proves that the equalities hold under \Cref{ass:duality}, using the contract form originating from \cite{cvitanic2018dynamic}, one can actually prove that this result extends beyond this assumption, using two complementary approaches.

\medskip
The first approach leverages the 2BSDE-based narrative developed in \cite{cvitanic2018dynamic} and briefly recalled in \Cref{sss:2BSDE}. In particular, one needs there to ensure that any non-negative density process $\dot K$ in \eqref{eq:xi_2BSDE} can be represented using a function $\psi$ as described in \eqref{eq:representation_Kdot}. In other words, the map $E\ni \gamma  \longmapsto \psi_t (x,y,z,\gamma,S) - F_t (x,y,z,S) \in \R$ needs to be surjective onto $[0,+\infty)$. It is sufficient to assume the above map is {continuous} and that $\psi$ satisfies
\begin{subequations}\label{cond:wlog}
	\begin{gather}
		\min_{\gamma \in \smallertext{E}} \psi_t(x,y,z,\gamma,S) = F_t(x,y,z,S), \tag{${\rm C}^{\rm min}_{\smallertext{\Wc}}$}
		\label{cond:wlog1}\\
		\sup_{\gamma \in \smallertext{E}} \psi_t(x,y,z,\gamma,S) = +\infty, \tag{${\rm C}^{\rm sup}_{\smallertext{\Wc}}$}
		\label{cond:wlog2}
	\end{gather}
\end{subequations}
for all $(t,x,y,z) \in [0,T] \times \Omega \times \R \times \R^d$ and $S \in {\bf \Sigma}_t(x)$.
With the above requirements on $\psi$, one can follow the 2BSDE approach to show that the value $\overline V^\psi_{\smallertext{\rm P}}$ defined in \eqref{eq:pb_principal_standard} achieves the original value $V_{\smallertext{\rm P}}$.

\medskip

Alternatively, one can use the BSDE-based approach, proposed in \cite{chiusolo2026new} and summarised in \Cref{sss:BSDE}. As already mentioned, this approach has the drawback that it requires to assume weak uniqueness for the SDE \eqref{eq:dynamic_SDE}. Nonetheless, one can show that if the parametrisation function $\psi \in \Oc $ satisfies {\rm\Cref{cond:revealing}}, \Cref{cond:wlog1} and some additional regularity conditions detailed in \Cref{ass:psi_star}, the associated contract can be proved to be optimal. Although \Cref{ass:psi_star} may be cumbersome to verify for an arbitrary $\psi \in \Oc $, it suffices to note that $\psi \equiv F$ works.

\begin{condition}\label{ass:psi_star}
	Let $\psi \in  \Oc $ satisfying {\rm\Cref{cond:wlog1}}, and define for $(t,x,y,z) \in [0,T] \times \Omega \times \R \times \R^d$,  $S \in {\bf \Sigma}_t(x)$
	\begin{align}\label{eq:gamma_star}
		\gamma^\star_t(x,y,z,S)  \in \argmin_{\gamma \in \smallertext{E}} \psi_t \big(x,y,z,\gamma,S\big).
	\end{align}
	The map $\psi \circ \gamma^\star : (t,x,y,z,S,S^\prime) \longmapsto \psi_t(x,y,z,\gamma_t^\star (x,y,z,S),S^\prime)$ is Lipschitz-continuous in $y$ and has linear growth in $z$, namely there exist $L> 0$ and $C > 0$ such that for all $(t,x,y,y^\prime,z) \in [0,T] \times \Omega \times \R \times \R \times \R^d$ and $(S, S') \in {\bf \Sigma}_t(x)^2$
	\begin{gather*}
		\big| \psi_t \circ \gamma^\star_t (x,y,z,S,S^\prime) - \psi_t \circ \gamma_t^\star (x,y^\prime,z,S,S^\prime) \big| 
		\leq L \left| y - y' \right|, \; \text{\rm and} \; 
		\left| \psi_t \circ \gamma_t^\star (x,0,z,S,S^\prime) \right| 
		\leq C \big(1 + \|x\| + \| z^\top S' z \|^{1/2} \big).
	\end{gather*}
\end{condition}

\begin{definition}[Relevant parametrisation functions]\label{def:relevant_psi}
	We let $\Psi$ be the subset of continuous functions $\psi \in \Oc $ satisfying {\rm\Cref{cond:revealing,cond:wlog1}}, and such that either
	\begin{enumerate}[label=$(\alph*)$]
		\itemsep0em
		\item \label{thm:cond_a} {\rm\Cref{cond:wlog2}} is satisfied$;$
		\item \label{thm:cond_b} or {\rm \Cref{ass:psi_star}} holds.
	\end{enumerate}
\end{definition}
The following theorem states the main results regarding the optimal form of contracts and the equality of the values.

\begin{theorem}\label{thm:principal}
	Let $\psi \in \Psi$, then the contract form parametrised by $\psi$ is optimal. More precisely
	\begin{enumerate}[label=$(\roman*)$]
		\item if $\psi$ satisfies \ref{thm:cond_a}, $\overline V^\psi_{\smallertext{\rm P}} = V_{\smallertext{\rm P}}$;
		\item if $\psi$ satisfies \ref{thm:cond_b} and weak uniqueness is assumed for {\rm SDE} \eqref{eq:dynamic_SDE}, $\overline V^\psi_{\smallertext{\rm P}} = V^\circ_{\smallertext{\rm P}} = V_{\smallertext{\rm P}}$.
	\end{enumerate}
\end{theorem}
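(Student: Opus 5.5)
In both cases the inequality $\overline V^\psi_{\rm P}\le V_{\rm P}$ is immediate from \Cref{prop:agent}: every contract in $\Cc$ parametrised by $\psi$ lies in $\Xi$, and its optimal responses are contained in $\Pc^\star(\xi)$. It therefore suffices to prove the reverse inequality, $\overline V^\psi_{\rm P}\ge V_{\rm P}$ in case $(i)$ and $\overline V^\psi_{\rm P}\ge V^\circ_{\rm P}$ in case $(ii)$. For $(ii)$ we then combine this with $V^\circ_{\rm P}\ge V_{\rm P}$ from \cite[Lemma 3.5]{chiusolo2026new}, which holds under weak uniqueness.

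\emph{Case $(i)$, 2BSDE route.} Fix $\xi\in\Xi$ and an optimal response $\P^\star\in\Pc^\star(\xi)$. By \cite[Proposition 4.6]{cvitanic2018dynamic}, the agent's continuation utility solves the 2BSDE \eqref{eq:agent_2BSDE} with some triple $(Y,Z,K)$.

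The first step is to reduce to an absolutely continuous $K$, following \cite[Section 4.6]{cvitanic2018dynamic}. Approximate $K$ by $K^\eps_t:=\frac1\eps\int_{(t-\eps)\vee0}^t K_s\,\drm s$. Then define $\xi^\eps$ through the forward equation \eqref{eq:xi_2BSDE} with density $\dot K^\eps\ge0$, driven by the same $Z$. By construction $\xi^\eps$ leaves the agent's value and his optimal response (the effort attaining $F$ at $\widehat\sigma^2$ with $\dot K=0$ $\P^\star$-a.e.) essentially unchanged. The principal's utility converges as $\eps\to0$ by the $\D^p/\H^p$ stability estimates. I expect this approximation to be the main technical obstacle, but it is essentially verbatim from \cite{cvitanic2018dynamic}.

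The second step represents the density. Fix $(t,x,y,z,S)$. By continuity of $\gamma\mapsto\psi_t(x,y,z,\gamma,S)$, \Cref{cond:wlog1} and \Cref{cond:wlog2}, the map $\gamma\mapsto\psi-F$ is onto $[0,\infty)$, assuming $E$ is connected or by an intermediate value argument along a path. A measurable selection theorem then gives a Borel $\hat\gamma$ with $\psi_t(x,y,z,\hat\gamma,S)-F_t(x,y,z,S)=k$ for every $k\ge0$. Set $\Gamma_t:=\hat\gamma_t(X,Y_t,Z_t,\widehat\sigma^2_t,\dot K_t)$. Then $Y=Y^{y_0,Z,\Gamma,\psi}$ with $y_0=Y_0\ge R_{\rm A}$, so $\xi=U^{(-1)}(Y_T^{y_0,Z,\Gamma,\psi})$. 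Moreover $(Z,\Gamma)\in\Vc^\psi$: integrability follows from that of $Y$, $Z$ and $K$, and condition $(ii)$ holds with $\P^\star$, because $\dot K=0$ $\drm t\otimes\P^\star$-a.e. and $\P^\star$ attains $F$. Hence $\P^\star\in\Pc^\star_\psi(Z,\Gamma)$, and taking the supremum gives $\overline V^\psi_{\rm P}\ge V_{\rm P}$.

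\emph{Case $(ii)$, BSDE route.} Start from the representation \eqref{eq:FB_solution} of $V^\circ_{\rm P}$. Take $y_0\ge R_{\rm A}$, $(Z,\Sigma)\in\Vc^\circ$ and $\P\in\Pc^{\circ,\star}(Z,\Sigma)$, and set $\Gamma_t:=\gamma^\star_t(X,Y^{\circ}_t,Z_t,\Sigma_t)$ using \eqref{eq:gamma_star}. On the support of $\P$ we have $\widehat\sigma^2=\Sigma$, so by \Cref{cond:wlog1} $\psi(Y^\circ,Z,\Gamma,\widehat\sigma^2)=F(Y^\circ,Z,\Sigma)$. The SDE \eqref{eq:xi_2BSDE_psi} is well posed thanks to the Lipschitz property in \Cref{ass:psi_star}, hence $Y^{y_0,Z,\Gamma,\psi}=Y^{y_0,Z,\circ}$ $\P$-a.s. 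The growth bound in \Cref{ass:psi_star} gives the integrability required in \Cref{def:contrat_new}$(i)$ under all $\P'\in\Pc$, not only those in $\Pc^\circ(\Sigma)$; this uniform control is the delicate point here. Condition $(ii)$ of \Cref{def:contrat_new} holds with the same $\P$, since $\P$ attains $F$ at $\Sigma=\widehat\sigma^2$.

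By \Cref{prop:agent}, $V_{\rm A}(\xi)=y_0$, and $\P$ is an optimal response, since it achieves $y_0$ with zero $\dot K$. The principal may select it, so $J_{\rm P}$ is unchanged and $\overline V^\psi_{\rm P}\ge V^\circ_{\rm P}$. This closes the chain of equalities $\overline V^\psi_{\rm P}=V^\circ_{\rm P}=V_{\rm P}$.
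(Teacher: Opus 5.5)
\textbf{Case $(ii)$ has a genuine gap.} Your overall structure matches the paper's: the easy inequality via \Cref{prop:agent}; for $(i)$, the 2BSDE route with the $\eps$-regularisation of $K$ plus a measurable selection; for $(ii)$, a transfer from $\Vc^\circ$ to $\Vc^\psi$ combined with $V^\circ_{\rm P}\ge V_{\rm P}$. The problem is in that transfer.

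You set $\Gamma=\gamma^\star(Y^\circ,Z,\Sigma)$, feeding $\gamma^\star$ with the contractible-volatility process. The paper instead sets $\Gamma=\gamma^\star(Y^\star,Z,\Sigma)$, where $Y^\star$ solves the fixed-point SDE \eqref{eq:Y_star}, whose generator is $y\longmapsto\psi_s(y,Z_s,\gamma^\star_s(y,Z_s,\Sigma_s),\widehat\sigma^2_s)$. This is why \Cref{ass:psi_star} is stated for the composite map, with the same $y$ in both slots and growth in $(z^\top S'z)^{1/2}$ at the realised $S'=\widehat\sigma^2$. It yields well-posedness of \eqref{eq:Y_star} and $\D^p$ bounds under every $\P'\in\Pc$ from $\|Z\|_{\H^p}$ alone.

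Your $\Gamma$ is exogenous, so \eqref{eq:xi_2BSDE_psi} is well posed simply because $\psi\in\Oc$; the Lipschitz part of \Cref{ass:psi_star} is not what you are using. The growth part only gives
\[
|\psi_s(0,Z_s,\Gamma_s,\widehat\sigma^2_s)|\le C\big(1+\|X\|+(Z_s^\top\widehat\sigma^2_sZ_s)^{1/2}\big)+(L+L_\psi)|Y^\circ_s|,
\]
where $L_\psi$ is the $y$-Lipschitz constant of $\psi$. So you need $Y^\circ$ to be defined on all of $\Omega$ and bounded in $\D^p$ under every deviation $\P'\in\Pc$. Neither is available:
\begin{itemize}
\item \eqref{eq:contract_FB} defines $Y^\circ$ only under $\Pc^\circ(\Sigma)$.
\item Once $Y^\circ$ is extended to some $\P'$ with $\widehat\sigma^2\neq\Sigma$, its generator $F(\cdot,Z,\Sigma)$ grows like $(Z^\top\Sigma Z)^{1/2}$ and involves costs on $B(\Sigma)$ rather than along $\beta^{\P'}$. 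Neither $\|Z\|_{\H^p}$ nor \eqref{eq:integrability_cost} controls this.
\end{itemize}
Hence the `delicate point' you flag is exactly what your construction leaves open. Without it you have neither $(Z,\Gamma)\in\Vc^\psi$ nor the hypotheses of \Cref{prop:agent}.

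The repair is the paper's: use $Y^\star$ instead. Under $\P^\circ$ one has $\widehat\sigma^2=\Sigma$, so by \Cref{cond:wlog1} the generator of \eqref{eq:Y_star} equals $F(\cdot,Z,\Sigma)$. Uniqueness then gives $Y^\star=Y^\circ$ $\P^\circ$-a.s., and the rest of your argument goes through unchanged.

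\textbf{Case $(i)$ needs only local fixes.} You follow the paper, but one justification is wrong and unnecessary. No limit $\eps\to0$ is needed, and convergence of $J_{\rm P}$ via $\D^p/\H^p$ stability estimates would require continuity of $U_{\rm P}$, which is only assumed Borel. What actually holds is exact equality for each fixed $\eps$. Since $K$ is non-decreasing with $K_0=0$ and $K_T=0$ $\P^\star$-a.s., we get $K^\eps\equiv0$, hence $Y^\eps=Y$ and $\xi^\eps=\xi$ $\P^\star$-a.s. Therefore $J_{\rm P}(\xi^\eps,\P^\star)=J_{\rm P}(\xi,\P^\star)$ as soon as $\P^\star\in\Pc^\star(\xi^\eps)$, which \Cref{prop:agent} gives.

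Two further points in $(i)$:
\begin{itemize}
\item $Y_0$ is only $\Fc_0^{+}$-measurable and must be replaced by a deterministic $y_0$; the paper uses $\E^{\P^\star}[Y_0]$.
\item Your Step 2 representation should be written with $(Y^\eps,\dot K^\eps)$, not $(Y,\dot K)$.
\end{itemize}
Your remark that surjectivity of $\gamma\longmapsto\psi-F$ onto $[0,\infty)$ needs some connectedness of $E$ is fair; the paper leaves it implicit.
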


Before turning to the proofs, some remarks are in order. First, for fixed $y_0 \in \R$, the value $\widetilde V^\psi_{\smallertext{\rm P}}(y_0)$ now corresponds to the value of a more standard stochastic control problem. It admits two state variables, $X$ and $Y^{y_{\smalltext{0}},\smallertext{Z},\smallertext{\Gamma},\psi}$, whose dynamics are respectively given by \eqref{eq:dynamic_SDE} and \eqref{eq:xi_2BSDE_psi}, indirectly controlled by the principal through the agent's best response to a terminal payment $\xi = U^{(\smalltext{-}1)}\big (Y_\smallertext{T}^{y_{\smalltext{0}},\smallertext{Z},\smallertext{\Gamma},\psi}\big)$. 
More precisely, for an optimal response $\mathfrak{b}^{\star,\psi}_t \coloneqq  b^{\star,\psi}_t(Y^{y_{\smalltext{0}},\smallertext{Z},\smallertext{\Gamma},\psi}_t,Z_t,\Gamma_t)$, $\mathrm{d}t \otimes \P^\star$--a.e., characterised via \eqref{eq:optimum} and a corresponding probability measure $\P^\star \in \Pc^\star_\psi (Z,\Gamma)$, the pair $(X,Y^{y_{\smalltext{0}},\smallertext{Z},\smallertext{\Gamma},\psi})$ is a solution to the following system of controlled SDEs
\begin{gather*}
		\drm X_t  = \mu_t \big(b^{\star,\psi}_t(Y^{y_{\smalltext{0}},\smallertext{Z},\smallertext{\Gamma},\psi}_t,Z_t,\Gamma_t) \big) \drm t + \sigma_t  \big(b_t^{\star,\psi}(Y^{y_{\smalltext{0}},\smallertext{Z},\smallertext{\Gamma},\psi}_t,Z_t,\Gamma_t) \big) \drm W^{\P^\smalltext{\star}}_t, \\[0.5em]
		\drm Y_t^{y_{\smalltext{0}},\smallertext{Z},\smallertext{\Gamma},\psi} = \big( c_t + Y_t^{y_{\smalltext{0}},\smallertext{Z},\smallertext{\Gamma},\psi} k_t \big) \big(b^{\star,\psi}_t(Y^{y_{\smalltext{0}},\smallertext{Z},\smallertext{\Gamma},\psi}_t,Z_t,\Gamma_t) \big) \drm t + Z_t^\top \sigma_t  \big(b^{\star,\psi}_t(Y^{y_{\smalltext{0}},\smallertext{Z},\smallertext{\Gamma},\psi}_t,Z_t,\Gamma_t) \big) \drm W^{\P^\smalltext{\star}}_t,
\end{gather*}
for $t \in [0,T]$, with initial condition $X_0 = x_0$ and $Y^{y_{\smalltext{0}},\smallertext{Z},\smallertext{\Gamma},\psi}_0 = y_0$. Indeed, by computing the dynamics \eqref{eq:xi_2BSDE_psi} under the agent's optimal effort $\mathfrak{b}^\star$, one can notice that by \Cref{cond:revealing} and the characterisation \eqref{eq:optimum}, we have
\begin{align*}
	\psi_t \big(Y^{y_{\smalltext{0}},\smallertext{Z},\smallertext{\Gamma},\psi}_t,Z_t,\Gamma_t,S^{\star,\psi}_t(Y^{y_{\smalltext{0}},\smallertext{Z},\smallertext{\Gamma},\psi}_t,Z_t,\Gamma_t) \big) 
	&= F_t \big(Y^{y_{\smalltext{0}},\smallertext{Z},\smallertext{\Gamma},\psi}_t, Z_t, S_t^{\star,\psi}(Y^{y_{\smalltext{0}},\smallertext{Z},\smallertext{\Gamma},\psi}_t,Z_t,\Gamma_t) \big) \\
	&= f_t \big(Y^{y_{\smalltext{0}},\smallertext{Z},\smallertext{\Gamma},\psi}_t, Z_t, b_t^{\star,\psi}(Y^{y_{\smalltext{0}},\smallertext{Z},\smallertext{\Gamma},\psi}_t,Z_t,\Gamma_t) \big), \; \mathrm{d}t\otimes \P^\star\text{\rm--a.e.}
\end{align*}

One should notice that the function $\psi$ does not explicitly appear in the above SDE system. Indeed, the principal's value is computed under the agent's optimal effort, so any chosen parametrisation function $\psi$ vanishes to $F$, according to \Cref{cond:revealing}. However, the optimal effort characterised through the function $b^{\star,\psi}$ does depend on the choice of $\psi$. Nevertheless, \Cref{thm:principal} states that the principal's value $V_{\smallertext{\rm P}}$ in the original problem, which is by definition independent of the choice of $\psi$, is equal to $\overline V^\psi_{\smallertext{\rm P}}$, implying that such value is also be independent of $\psi$. This will appear more clearly in the proof of the theorem detailed below, but intuitively, one can switch from $\psi \in \Psi$ to another $\phi \in \Psi$ using the fact that they should both satisfy \Cref{cond:revealing,cond:wlog1}.

\medskip

To completely solve the principal's problem, one can then write the Hamilton--Jacobi--Bellman equation satisfied by $V^\psi_{\smallertext{\rm P}} (y_0)$ introduced in \eqref{eq:pb_principal_standard} and look for an appropriate solution. More precisely, the Hamiltonian functional for the principal's problem is now given, for any $(t,x,y,v,p_x,p_y,q_{xx},q_{yy},q_{xy})\in[0,T]\times\Omega\times\R\times\R^d\times\R\times\Sc_d(\R)\times\R\times\R^d$, by
\begin{align*}
	H_t^{\smallertext{\rm P}}(x,y,v,p_x,p_y,q_{xx},q_{yy},q_{xy})\coloneqq \sup_{(z,\gamma)\in\R^\smalltext{d}\times \smallertext{E}}\sup_{b^{\smalltext{\star}\smalltext{,}\smalltext{\psi}}\in\smallertext{\mathfrak{B}}^{\smalltext{\star}\smalltext{,}\smalltext{\psi}}}h_t^{\smallertext{\rm P}}(x,y,v,p_x,p_y,q_{xx},q_{yy},q_{xy},z,\gamma),
\end{align*}
where 
\begin{align*}
	h_t^{\smallertext{\rm P}}(x,y,v,p_x,p_y,q_{xx},q_{yy},q_{xy},z,\gamma)
	\coloneqq &\ \mu_t\big(x,b^{\star,\psi}_t(x,y,z,\gamma)\big)\cdot p_x
	-k^{\smallertext{\rm P}}_t\big(x,b^{\star,\psi}_t(x,y,z,\gamma)\big)v
	-c^{\smallertext{\rm P}}_t\big(x,b_t^{\star,\psi}(x,y,z,\gamma)\big) \\
	&+[ c_t+ y k_t ] \big(x,b_t^{\star,\psi}(x,y,z,\gamma) \big)p_y  +[\sigma\sigma^\top]_t\big(x,b^{\star,\psi}_t(x,y,z,\gamma) \big)z\cdot q_{xy}  \\
	&+\frac12\mathrm{Tr}\big[[\sigma\sigma^\top]_t\big(x,b_t^{\star,\psi}(x,y,z,\gamma)\big)q_{xx}\big]+\frac12\big\|\sigma^\top_t\big(x,b^{\star,\psi}_t(x,y,z,\gamma)\big)\big\|^2q_{yy}.
\end{align*}
So choosing a specific $\psi\in\Psi$ can roughly speaking be related to writing $H^{\smallertext{\rm P}}$ in different manners as a certain supremum, which itself is naturally associated to a dynamic of $Y$ leading to revealing contracts which are without loss of generality for the principal. 

\subsection{Proof of Theorem \ref{thm:principal}}\label{ss:proof_main}

To prove the main theorem, we first claim that $V_{\smallertext{\rm P}} \geq \overline V^\psi_{\smallertext{\rm P}}$ for any $\psi \in \Psi$, which echoes \cite[Proposition 3.4]{cvitanic2018dynamic}. Indeed, let $\xi \in \Cc$ defined by \eqref{eq:xi_2BSDE_psi} with $y_0 \geq R_{\smallertext{\rm A}}$, $\psi \in \Psi$ and $(Z,\Gamma) \in \Vc^\psi$. By \Cref{prop:agent}, $\xi \in \Xi$, \textit{i.e.} $\Cc \subset \Xi$, thus impliying the desired inequality. It remains to establish the converse inequality.

\subsubsection{The 2BSDE road}\label{sss:2BSDE_proof}

In this section, we let $\psi \in \Psi$ such that \ref{thm:cond_a} is satisfied and show that $\overline V^\psi_{\smallertext{\rm P}} \geq V_{\smallertext{\rm P}}$. The main idea of this proof is to show that any admissible contract $\xi \in \Xi$ can be approximated—in an appropriate sense specified below—by some $\xi^\varepsilon \in \Cc$. The reasoning largely follows the arguments detailed in \cite[Section 4]{cvitanic2018dynamic}; we therefore only highlight the main steps while addressing the issue raised in \cite{chiusolo2026new} about \Cref{ass:duality} (see \Cref{rk:strong_duality}).

\medskip

\textit{Step 1.} We first let $\xi \in \Xi$ and argue by \cite[Proposition 4.5]{cvitanic2018dynamic} that the 2BSDE \eqref{eq:agent_2BSDE} has a unique solution $(Y,Z,K)$, satisfying appropriate integrability conditions (see \cite[Definition 4.4]{cvitanic2018dynamic}). On the one hand, by assumption on $\Xi$, there should exist (at least) an optimal response $\P^\star \in \Pc^\star(\xi)$, and thus an associated optimal control $\beta^{\P^\smalltext{\star}} \in \Bc$. For such optimal response, \cite[Proposition 4.6]{cvitanic2018dynamic} states that $K_\smallertext{T} = 0$, $\P^\star$--a.s. and $\beta^{\P^\smalltext{\star}}$ is a maximiser of the `constrained Hamiltonian' defined in \eqref{eq:hamiltonian_constrained}. Then fix $\varepsilon > 0$ and define an absolutely continuous approximation of $K$
\begin{align}\label{eq:approx_K}
	K^\varepsilon_t \coloneqq \dfrac1{\varepsilon} \int_{(t-\varepsilon)^{\smalltext{+}}}^t K_s \drm s, \; t \in [0,T].
\end{align}
We first note that by definition, $0 \leq K^\varepsilon_T \leq K_T$, hence $K^\varepsilon$ inherits the integrability from $K$ and satisfies the same minimality condition. Furthermore, a probability measure $\P$ satisfies $K_\smallertext{T} = 0$, $\P$--a.s. if and only if it satisfies $K^\varepsilon_\smallertext{T} = 0$, $\P$--a.s. We will later denote by $\dot K^\varepsilon$ its (non-negative) density with respect to the Lebesgue measure. 

\medskip

\textit{Step 2.} We then consider the process $Y^\varepsilon$ defined as the unique strong solution to the following SDE (recall that $F$ is Lipschitz-continuous with respect to the $y$-variable)
\begin{align}\label{eq:Y_eps}
	Y_t^\varepsilon = Y_0 - \int_0^t F_s\big(Y^\varepsilon_s, Z_s,\widehat \sigma^2_s \big) \drm s + \int_0^t Z_s \cdot \drm X_s - \int_0^t \dot K^\varepsilon_s \drm s, \; t \in [0,T],
\end{align}
and define $\xi^\varepsilon \coloneqq U^{(\smalltext{-}1)}(Y_T^\varepsilon )$.
Following the arguments in \cite[Section 4.6, Step 1]{cvitanic2018dynamic}, one can verify that $(Y^\varepsilon,Z,K^\varepsilon)$ solves the 2BSDE \eqref{eq:agent_2BSDE} with the same generator $F$ but terminal condition $U(\xi^\varepsilon)$. Indeed, since $K^\varepsilon_T \leq K_T$, $K^\varepsilon$ satisfies the minimality condition and\footnote{This comes from the fact that $K^\eps$ inherits the integrability of $K$, $Z$ and $K$ are in the appropriate integrability class as solutions to the 2BSDE, $F$ is uniformly Lipschitz-continuous in $y$, and standard estimates for SDEs.}
\begin{align*}
	\sup_{\P \in \Pc} \E^\P \Big[ \big| U(\xi^\varepsilon) \big|^p \Big] < + \infty.
\end{align*}

One can then derive the appropriate integrability conditions on $(Y^\varepsilon, Z^\varepsilon)$ using \emph{a priori} estimates for 2BSDEs, see for example \citeayn[Theorem 4.4]{possamai2018stochastic}, to obtain
\begin{align}\label{eq:2BSDE_estimate}
	\| Z \|^{\bar p}_{\H^\smalltext{\bar p}} +
	\| Y^\varepsilon \|^{\bar p}_{\D^\smalltext{{\bar p}}} < + \infty, \; \text{for some} \; \bar p \in (1,p).
\end{align}

\textit{Step 3.} We now want to prove that $\xi^\varepsilon$ introduced above belong to the relevant class of contracts in the sense of \Cref{def:relevant_contracts}, namely $\xi^\varepsilon \in \Cc$. Recall first that $\psi \in \Psi$ is such that \ref{thm:cond_a} is satisfied. In particular, for $t \in [0,T]$ and $(x,y,z,S) \in \Omega \times \R \times \R^d \times  {\bf \Sigma}_t(x)$, the map $E\ni\gamma  \longmapsto \psi_t(x,y,z,\gamma,S)-F_t(x,y,z,S) \in \R_{\smallertext{+}}$ is surjective onto $[0,+\infty)$. Since $\dot K^\varepsilon_s \geq 0$ for all $s \in [0,T]$, the surjectivity of the above function and classical measurable selection arguments allow to deduce the existence of an $\F$-predictable $\Gamma^{\psi, \varepsilon}$ such that 
\begin{align}\label{eq:K_as_psi}
	\dot K^\varepsilon_s = \psi_s\big(Y^\varepsilon_s, Z_s,\Gamma^{\psi,\varepsilon}_s,\widehat \sigma^2_s \big) - F_s\big(Y^\varepsilon_s, Z_s,\widehat \sigma^2_s \big),\; \mathrm{d}s\otimes\P\text{\rm--a.e.}
\end{align}
Substituting in \eqref{eq:Y_eps}, we obtain
\begin{align*}
	Y_t^\varepsilon = Y_0 - \int_0^t  \psi_s\big(Y^\varepsilon_s, Z_s,\Gamma^{\psi,\varepsilon}_s,\widehat \sigma^2_s \big) \drm s + \int_0^t Z_s \cdot \drm X_s, \; t \in [0,T].
\end{align*}
In other words, $\xi^\varepsilon \coloneqq U^{(\smalltext{-}1)}(Y_T^\varepsilon )$ admits the representation \eqref{eq:xi_2BSDE_psi} for an initial value $Y_0 \in \R$ and a pair $(Z,\Gamma^{\psi,\varepsilon})$. 

\medskip

To be perfectly rigorous, the value $Y_0$ from the 2BSDE is $\Fc_0^\smallertext{+}$-measurable and thus in general not deterministic. However, it is standard to show that the principal can achieve the same utility by using either $Y_0$ or $\E^{\P^\smallertext{\star}}[Y_0]$ (see for instance similar arguments in \citeayn[Lemma 4.8]{hubert2022incentives} or \cite[Step 2 in the proof of Theorem 3.10]{chiusolo2026new}). Hence, the restriction to deterministic $Y_0$ is without loss of generality.

\medskip

To conclude that $\xi^\varepsilon \in \Cc$, it remains to verify that $(Z,\Gamma^{\psi,\varepsilon}) \in \Vc^\psi$. First, the appropriate integrability conditions on $(Y^\varepsilon, Z^\varepsilon)$ required in \Cref{def:contrat_new} $(i)$ are already established above in \eqref{eq:2BSDE_estimate}. Moreover, using the arguments in \textit{Step 1}, $K_\smallertext{T} = K^\varepsilon_\smallertext{T} = 0$, $\P^\star$--a.s., implying that $Y^\varepsilon = Y$, $\P^\star$--a.s. In particular, $\P^\star$ is also a best response to $\xi^\varepsilon$, namely $\P^\star \in \Pc^\star(\xi^\varepsilon)$, and thus \Cref{def:contrat_new} $(ii)$ is verified. The two above arguments ensure that $(Z,\Gamma^{\psi,\varepsilon}) \in \Vc^\psi$, hence $\xi^\varepsilon \in \Cc$. In other words, any contract $\xi \in \Xi$ can be approximated by a contract $\xi^\varepsilon \in \Cc$.

\medskip

\textit{Step 4.} To conclude the proof, we show that the utility of the principal remains unchanged whether the contract $\xi \in \Xi$ or its approximation $\xi^\varepsilon \in \Cc$ is implemented. By the above arguments, $\P^\star$ is a best response to both contracts, and since $Y^\varepsilon = Y$ $\P^\star$--a.s., we also have $\xi = \xi^\varepsilon$ $\P^\star$-a.s.  Thus
\begin{align*}
	\E^{\P^{\smalltext{\star}}} \bigg[ \Kc^{\smallertext{\rm P},\P^{\smalltext{\star}}}_\smallertext{T}  U_{\smallertext{\rm P}} ( \xi ) - \int_0^\smallertext{T} \Kc^{\smallertext{\rm P},\P^{\smalltext{\star}}}_s c_s^{\smallertext{\rm P}} \big(\beta_s^{\P^{\smalltext{\star}}} \big) \mathrm{d}s\bigg]
	= \E^{\P^{\smalltext{\star}}} \bigg[ \Kc^{\smallertext{\rm P},\P^{\smalltext{\star}}}_\smallertext{T}  U_{\smallertext{\rm P}} (\xi^\varepsilon) - \int_0^\smallertext{T} \Kc^{\smallertext{\rm P},\P^{\smalltext{\star}}}_s c_s^{\smallertext{\rm P}} \big(\beta_s^{\P^{\smalltext{\star}}} \big) \mathrm{d}s\bigg],
\end{align*}
in other words $J_{\smallertext{\rm P}} (\xi, \P^\star ) = J_{\smallertext{\rm P}} (\xi^\varepsilon, \P^\star )$. By arbitrariness of $\xi \in \Xi$, we conclude that $\overline V^\psi_{\smallertext{\rm P}} \geq V_{\smallertext{\rm P}}$.

\begin{remark}\label{rk:2BSDE}
		In the above proof, if instead of $\psi \in \Psi$, one wants to consider a different parametrisation function $\phi \in \Psi$ also satisfying \ref{thm:cond_a}, the only change is the representation of $\dot K^\varepsilon$ through \eqref{eq:K_as_psi} using $\phi$ for some $\F$-predictable process $\Gamma^{\phi,\varepsilon}$, potentially different from $\Gamma^{\psi,\varepsilon}$. In other words, the choice of the parametrisation function $\psi$ only impacts the way $\dot K^\varepsilon$ is represented, but all representations are clearly equivalent. This highlights in particular that the value function $\overline V^\psi_{\smallertext{\rm P}}$ is necessarily independent of the choice of $\psi \in \Psi$.
		
		\medskip
		
		Beyond this, one could actually relax {\rm \Cref{cond:wlog2}} by changing the approximation \eqref{eq:approx_K} of the {\rm 2BSDE} term $K$. For example, if considering $\psi \equiv F$, one can actually take $K^\varepsilon \equiv 0$. With this choice, $K^\varepsilon$ trivially satisfies the minimality condition, and the rest of the proof proceeds $($and can even be simplified$)$.
\end{remark}

\begin{remark}\label{rk:strong_duality}
As mentioned in {\rm \cite[Remark 3.15]{chiusolo2026new}}, the proof of {\rm \cite[Theorem 3.6]{cvitanic2018dynamic}} may fail if {\rm \Cref{ass:duality}} is not satisfied. More precisely, to derive the contract form \eqref{eq:contract_CPT} starting from \eqref{eq:xi_2BSDE}, one needs to replace \eqref{eq:K_as_psi} by existence of an appropriate process $\Gamma$ satisfying
	\begin{align}\label{eq:repres_Kdot}
		\dot K_s + F_s \big(Y_s, Z_s, \widehat \sigma^2_s \big) = H_s \big(Y_s, Z_s, \Gamma_s \big) - \dfrac12 {\rm Tr} \big[ \Gamma_s \widehat \sigma^2_s \big],\; \mathrm{d}s\otimes\P\textnormal{--a.e.},\; \P\in\Pc.
	\end{align}

	A sufficient condition for this is to verify that for all $(t,x,y,z) \in [0,T] \times \Omega \times \R \times \R^d$ and $S \in {\bf \Sigma}_t(x)$, the map
\begin{align*}
 \Sc_d(\R) \ni\gamma\overset{\varphi}\longmapsto H_t(x,y,z,\gamma) - F_t\big(x,y,z,S \big) - \dfrac12 {\rm Tr} \big[S \gamma \big] \in \R_\smallertext{+},
\end{align*}
is surjective. In {\rm \cite[Equation (4.17)]{cvitanic2018dynamic}}, this function is claimed to be non-negative, convex, continuous and coercive, and thus surjective onto $(0,\infty)$. Non-negativity and continuity are clear,  however, the map $\varphi$ is in general surjective \emph{only} onto $(\underline \varphi, \overline \varphi)$, where
\begin{align*}
	\underline \varphi \coloneqq  \inf_{\gamma \in \smallertext{\Sc}_\smalltext{d}(\R)} \varphi(\gamma), \; \text{and} \; \overline \varphi \coloneqq  \sup_{\gamma \in \smallertext{\Sc}_\smalltext{d}(\R)} \varphi(\gamma).
\end{align*}
While it can be checked that under mild assumptions $\overline \varphi = + \infty$, one needs to verify that $\underline \varphi = 0$, namely
\begin{align}\label{eq:dual_pb}
	F_t(x,y,z,S) = \inf_{\gamma \in \smallertext{\Sc}_\smalltext{d}(\R)} \bigg\{ H_t(x,y,z,\gamma) - \dfrac12 {\rm Tr} \big[S \gamma \big] \bigg\}, \; \text{\rm for} \; (t,x,y,z) \in [0,T] \times \Omega \times \R \times \R^d \; \text{\rm and} \; S \in {\bf \Sigma}_t(x).
\end{align}
Note that the above \Cref{eq:dual_pb} is equivalent to $-2F$ being the convex conjugate of $2H$. Since $2H$ is itself the convex conjugate of $-2F$, the previous condition is equivalent to $-2F = (2H)^\star = (-2F)^{\star \star}$, so $-2F$ coincides with its Fenchel bi-conjugate. Recall that, in general, for any appropriate function $g$ we have $g^{\star \star} \leq g$, implying here
\begin{align}\label{eq:duality2}
	F_t\big(x,y,z,S \big) \leq \inf_{\gamma \in \smallertext{\Sc}_\smalltext{d}(\R)} \bigg\{ H_t(x,y,z,\gamma) - \dfrac12 {\rm Tr} \big[S \gamma \big] \bigg\}.
\end{align}
However, the equality $g^{\star \star} = g$ holds if and only if $g$ is a proper, lower semi-continuous and convex function. In particular, {\rm \cite[Section 4.3]{chiusolo2026new}} exhibits a counterexample with a duality gap, meaning that the inequality \eqref{eq:duality2} can be strict for some specific choice of model coefficients. 
	
\medskip

Finally, note that {\rm \Cref{ass:duality}} is stronger than \eqref{eq:dual_pb}, requiring in addition that the infimum is achieved for some Borel-measurable function $\gamma$. This comes from the fact that the density $\dot K$ can vanish. Actually, under the probability measure $\P^\star$ corresponding to the agent's optimal response, one should have $\dot K \equiv 0$, $\P^\star$--{\rm a.s.} Therefore, one also needs to ensure existence of an appropriate process $\Gamma$ such that \eqref{eq:repres_Kdot} holds even when $\dot{K} = 0$. A sufficient condition for this is to assume in addition that the infimum in \eqref{eq:dual_pb} is achieved, which is exactly {\rm \Cref{ass:duality}}.\footnote{In \citeayn[Theorem 4.3 $(ii)$]{ren2023entropic}, the authors only require that the infimum is attained. However, this condition is not sufficient, since, as explained above, it is also required that this infimum coincides with $F$.}
\end{remark}

\subsubsection{The BSDE approach}\label{sss:BSDE_proof}

In this section, we use the approach recently developed in \cite{chiusolo2026new} to show optimality of the contract form \eqref{eq:xi_2BSDE_psi} when $\psi \in \Psi$ satisfies \ref{thm:cond_b}. Again, for this approach, it is required to assume that any admissible control for the agent induces a unique weak solution for the controlled SDE \eqref{eq:dynamic_SDE_P}, see \cite[Assumption 2.2]{chiusolo2026new}. 

\medskip
Since $V_{\smallertext{\rm P}}^\circ$ defined in \eqref{eq:pb_principal_FB} is the best possible value for the principal, showing that contracts of the form \eqref{eq:xi_2BSDE_psi} allow to achieve such value would be sufficient to ensure their optimality. For this, it is enough to show that for any pair $(\Sigma,Z)$ parametrising the contract form \eqref{eq:contract_FB} and a corresponding best response $\mathfrak{b}^\circ$ in the sense of \eqref{eq:u*_FB} above, one can find a triple $(Z,\Gamma,\psi)$ parametrising the contract form \eqref{eq:xi_2BSDE_psi} such that the optimal response coincides, as well as the dynamics of both contracts under the optimal response. We actually provide below a stronger result, namely that for a fixed parametrisation function $\psi \in \Psi$ satisfying \ref{thm:cond_b}, one can always find a pair $(Z,\Gamma) \in \Vc^\psi$ admissible for the contract form \eqref{eq:xi_2BSDE_psi}.

\begin{proposition}\label{prop:one-to-one}
	Fix $y \geq R_{\smallertext{\rm A}}$ and $\psi \in \Psi$ satisfying \ref{thm:cond_b}. Let $(Z,\Sigma) \in \Vc^\circ$ with a corresponding contract $\xi^\circ$ and $\P^\circ \in \Pc^{\circ,\star} (Z,\Sigma)$. Using the map $\gamma^\star$ defined in \eqref{eq:gamma_star}, there exists a unique strong solution $Y^\star$ to the following {\rm SDE}
		\begin{align}\label{eq:Y_star}
			Y_t^\star = y - \int_0^t \psi_s\big(Y^\star_s, Z_s, \gamma^\star_s(Y_s^\star,Z_s,\Sigma_s),\widehat \sigma^2_s \big) \drm s + \int_0^t Z_s \cdot \drm X_s, \; t \in [0,T].
		\end{align}
		Letting $\Gamma_\cdot\coloneqq \gamma^\star_\cdot(Y_\cdot^\star,Z_\cdot,\Sigma_\cdot)$, $Y^\star$ satisfies the representation \eqref{eq:xi_2BSDE_psi}, and $(Z,\Gamma) \in \Vc^\psi$. Moreover, $\P^\circ \in \Pc_\psi^{\star} (Z,\Gamma)$ and 
		\begin{align*}
			J_{\smallertext{\rm P}} ( \xi^\circ,\P^\circ) = J_{\smallertext{\rm P}} \big( U^{(\smalltext{-}1)} (Y_\smallertext{T}^{\star}),\P^\circ \big).
		\end{align*}
\end{proposition}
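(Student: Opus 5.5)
The plan is to build $Y^\star$ directly and then show that, under $\P^\circ$, it coincides with the contractible-volatility continuation process $Y^{y,Z,\circ}$.

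\emph{Existence, uniqueness and integrability.} By \Cref{ass:psi_star}, the driver
\[
(s,y)\longmapsto \psi_s\big(y,Z_s,\gamma^\star_s(y,Z_s,\Sigma_s),\widehat\sigma^2_s\big)=\psi\circ\gamma^\star_s(y,Z_s,\Sigma_s,\widehat\sigma^2_s)
\]
is Lipschitz in $y$, uniformly in the other variables. The standard Picard argument, path by path after fixing the integral $\int_0^\cdot Z_s\cdot\drm X_s$ (which is aggregated for all $\P\in\Pc$), therefore gives a unique strong solution $Y^\star$ of \eqref{eq:Y_star}.

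Setting $\Gamma_\cdot\coloneqq \gamma^\star_\cdot(Y^\star_\cdot,Z_\cdot,\Sigma_\cdot)$, which is $\F$-predictable by Borel measurability of $\gamma^\star$, the process $Y^\star$ solves \eqref{eq:xi_2BSDE_psi} by construction. For \Cref{def:contrat_new}$(i)$:
\begin{itemize}
\item $\|Z\|_{\H^p}<\infty$ comes from $(Z,\Sigma)\in\Vc^\circ$.
\item The linear growth bound $|\psi\circ\gamma^\star_s(0,z,S,S')|\le C(1+\|x\|+|z^\top S'z|^{1/2})$ with $S'=\widehat\sigma^2_s$ controls $\int_0^T|\psi_s(0,Z_s,\Gamma_s,\widehat\sigma^2_s)|\drm s$ by $C\big(T+T\sup_t\|X_t\|+\sqrt T\,(\int_0^TZ_s^\top\widehat\sigma^2_sZ_s\drm s)^{1/2}\big)$. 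It has $p$-th moments uniformly over $\Pc$, since $\sigma,\lambda$ are bounded so $\sup_\P\E^\P[\sup_t\|X_t\|^p]<\infty$.
\item Gronwall applied to the Lipschitz SDE, together with BDG for the stochastic integral, then gives $\|Y^\star\|_{\D^p}<\infty$, possibly after reducing $p$ to some $\bar p\in(1,p)$.
\end{itemize}

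\emph{Key identification under $\P^\circ$.} Under $\P^\circ\in\Pc^\circ(\Sigma)$ we have $\widehat\sigma^2_s=\Sigma_s$, $\drm s\otimes\P^\circ$--a.e. By the choice \eqref{eq:gamma_star} of $\gamma^\star$ and \Cref{cond:wlog1},
\[
\psi_s\big(y,Z_s,\gamma^\star_s(y,Z_s,\Sigma_s),\Sigma_s\big)=F_s(y,Z_s,\Sigma_s)\quad\text{for every } y.
\]
So, $\P^\circ$--a.s., $Y^\star$ solves \eqref{eq:contract_FB}, and by pathwise uniqueness for this Lipschitz equation $Y^\star=Y^{y,Z,\circ}$, $\P^\circ$--a.s. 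In particular $U^{(-1)}(Y^\star_T)=\xi^\circ$, $\P^\circ$--a.s., which gives the equality of the $J_{\rm P}$'s.

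For \Cref{def:contrat_new}$(ii)$, by \Cref{def:contract_FB}$(ii)$ and optimality of $\P^\circ$ in the constrained problem (Proposition 3.9 of \cite{chiusolo2026new}), $\beta^{\P^\circ}$ maximises $f$ over $B_s(\Sigma_s)$. This yields $\psi_s=F_s(Y^\star_s,Z_s,\widehat\sigma^2_s)=f_s(Y^\star_s,Z_s,\beta^{\P^\circ}_s)$, $\drm s\otimes\P^\circ$--a.e. Hence $(Z,\Gamma)\in\Vc^\psi$.

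\emph{Optimality of $\P^\circ$ for the new contract.} I will apply \Cref{prop:agent}, which is available since $\psi\in\Psi$ satisfies \Cref{cond:revealing}; it gives $V_{\rm A}(U^{(-1)}(Y^\star_T))=y$. On the other hand, by the comparison/Itô argument in the proof of \Cref{prop:agent}, $J_{\rm A}(\xi,\P^\circ)=y$ precisely when the gap
\[
\psi_s(Y^\star_s,Z_s,\Gamma_s,\widehat\sigma^2_s)-f_s(Y^\star_s,Z_s,\beta^{\P^\circ}_s)
\]
vanishes $\drm s\otimes\P^\circ$--a.e., which is exactly what was just established. Therefore $\P^\circ\in\Pc^\star_\psi(Z,\Gamma)$.

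\emph{Main obstacle.} The delicate point is the integrability and measurability step. First, $\gamma^\star$ must be a Borel selector of the argmin, so that $\Gamma$ is predictable and the SDE is well posed under every $\P\in\Pc$, not only under $\P^\circ$. Second, the growth estimate has to hold uniformly over $\Pc$, where $\widehat\sigma^2\neq\Sigma$; this is exactly why \Cref{ass:psi_star} controls $\psi\circ\gamma^\star$ for the mismatched pair $(S,S')$, and I will lean on that bound.
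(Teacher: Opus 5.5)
Your proposal is correct and follows essentially the same route as the paper. You build $Y^\star$ from the Lipschitz driver supplied by \Cref{ass:psi_star}, use \Cref{cond:wlog1} to get $\psi_s\big(y,Z_s,\gamma^\star_s(y,Z_s,\Sigma_s),\Sigma_s\big)=F_s(y,Z_s,\Sigma_s)$ so that $Y^\star$ solves \eqref{eq:contract_FB} under $\P^\circ$, and conclude optimality of $\P^\circ$ through the gap argument behind \Cref{prop:agent}. Your explicit check of \Cref{def:contrat_new}$(ii)$ with $\P^\circ$ and the Gronwall identification $Y^\star=Y^{y,Z,\circ}$, $\P^\circ$--a.s., are in fact more careful than the paper's sketch.

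One small slip concerns the growth estimate. The bound in \Cref{ass:psi_star} controls $\psi_s\big(0,Z_s,\gamma^\star_s(0,Z_s,\Sigma_s),\widehat\sigma^2_s\big)$, which is exactly what your Gronwall step needs. It does not control $\psi_s(0,Z_s,\Gamma_s,\widehat\sigma^2_s)$ with $\Gamma_s=\gamma^\star_s(Y^\star_s,Z_s,\Sigma_s)$. For the last condition in \Cref{def:contrat_new}$(i)$ you therefore need an extra term proportional to $\sup_t|Y^\star_t|$, obtained from the Lipschitz properties in $y$ of $\psi$ and of $\psi\circ\gamma^\star$. This is harmless once $Y^\star\in\D^p$ has been established.
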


The above result states that, starting from an arbitrary pair $(Z,\Sigma) \in \Vc^\circ$ and an associated optimal response $\P^\circ \in \Pc^{\circ,\star} (Z,\Sigma)$ in the \textit{contractible-volatility} problem, one can actually find an appropriate contract $\xi \in \Cc$ in the sense of {\rm \Cref{def:relevant_contracts}} allowing to incentivise the exact same optimal response $\P^\circ$ but in the original problem, and such that the principal's expected utility remains the same. This ensures the required inequality for the principal's value, namely $\overline V^\psi_{\smallertext{\rm P}} \geq V^\circ_{\smallertext{\rm P}}$, and since $V^\circ_{\smallertext{\rm P}} \geq V_{\smallertext{\rm P}} \geq \overline V^\psi_{\smallertext{\rm P}}$, we deduce the equality between the three values.
	
\begin{remark}
	One may want to study the converse correspondence, namely defining the pair $(Z,\Sigma) \in \Vc^\circ$ corresponding to a given couple $(Z,\Gamma) \in \Vc^\psi$ and forcing a specific probability $\P^\star \in \Pc_\psi^{\star} (Z,\Gamma)$. For this, we need to ensure existence and uniqueness of a strong solution to the following {\rm SDE}
	\begin{align*}
		Y_t^\circ = y - \int_0^t F_s\big(Y^\circ_s, Z_s,  [\sigma \sigma^\top]_s \big(b^{\star,\psi}_s (Y^\circ_s,Z_s,\Gamma_s) \big) \big) \drm s + \int_0^t Z_s \cdot \drm X_s, \; t \in [0,T].
	\end{align*}
	Letting $\Sigma_\cdot \coloneqq  [\sigma \sigma^\top]_\cdot \big(b^{\star,\psi}_\cdot(Y^\circ_\cdot,Z_\cdot,\Gamma_\cdot) \big)$, or equivalently
	\begin{align*}
		\Sigma_t \in \argmin_{\smallertext{S} \in \smallertext{{\bf \Sigma}}_\smalltext{t}} \Big\{ \psi_t \big(Y^\circ_t,Z_t,\Gamma_t,S\big) - F_t\big(Y^\circ_t,Z_t,S\big) \Big\}, \; \drm t \otimes \P\text{\rm--a.e. on } [0,T] \times \Omega,
	\end{align*}
	$Y^\circ$ satisfies the representation \eqref{eq:contract_FB} with $(Z,\Sigma) \in \Vc^\circ$, and one can show equivalence between the agent's optimal response and equality between the principal's expected utility, following a similar reasoning as in the proof below.
\end{remark}

\begin{proof}[Proof of Proposition \ref{prop:one-to-one}]
	We fix throughout the proof $y \geq R_{\smallertext{\rm A}}$ and $\psi \in \Psi$ satisfying \ref{thm:cond_b}. Let $\Sigma \in \Sc$, $Z \in \Vc^\circ(\Sigma)$, and an associated terminal payment $\xi^\circ \coloneqq U^{(\smalltext{-}1)}(Y_\smallertext{T}^{y,Z,\circ})$ with $Y^{y,Z,\circ}$ defined through \eqref{eq:contract_FB}. As mentioned in \Cref{sss:BSDE}, such contract leads to a best-response $\mathfrak{b}^\circ_\cdot$, characterised through the function $b^\circ$ in \eqref{eq:u*_FB}. Our goal is to determine an appropriate parametrisation of a contract of the form \eqref{eq:xi_2BSDE_psi}, in the sense that such contract will lead to the same best response for the agent and the same dynamics for the principal's state variable. 
	
	\medskip
	As in the statement of \Cref{prop:one-to-one}, we consider the strong solution $Y^\star$ to the {\rm SDE} \eqref{eq:Y_star}, noting that \Cref{ass:psi_star} is sufficient to ensure that such solution exists and is unique. Letting $\Gamma_\cdot \coloneqq \gamma^\star_\cdot(Y_\cdot^\star,Z_\cdot,\Sigma_\cdot)$, one can rewrite $Y^\star$ as
	\begin{align*}
		Y_t^\star = y - \int_0^t \psi_s\big(Y^\star_s, Z_s, \Gamma_s, \widehat \sigma^2_s \big) \drm s + \int_0^t Z_s \cdot \drm X_s,\; t\in[0,T],
	\end{align*}
	which coincides with the representation \eqref{eq:xi_2BSDE_psi}. Furthermore, the growth condition in \Cref{ass:psi_star} ensures that $(Z,\Gamma) \in \Vc^\psi$ in the sense of \Cref{def:contrat_new}. By \Cref{prop:agent}, we deduce that the agent's optimal response takes the form $\mathfrak{b}^{\star,\psi}_\cdot \coloneqq b^{\star,\psi}_\cdot(Y^{\star}_\cdot,Z_\cdot,\Gamma_\cdot)$, where $b^{\star,\psi}$ satisfies \eqref{eq:optimum}. In other words
	\begin{align*}
		\mathfrak{b}^{\star,\psi}_t \in \argmax_{b \in \smallertext{B}_\smalltext{t}(\smallertext{\Sigma}_\smalltext{t}^\smalltext{\star, \psi})} \big\{ f_t \big(Y_t^\star,Z_t,b \big) \big\}, \; \text{with} \;
		\Sigma_t^{\star, \psi} \in \argmin_{\smallertext{S} \in \smallertext{\bf \Sigma}_\smalltext{t}} \big\{ \psi_t \big(Y_t^\star,Z_t,\Gamma_t,S \big) - F_t \big(Y_t^\star,Z_t,S \big) \big\}.
	\end{align*}
	Recalling that $\Gamma_\cdot = \gamma^\star_\cdot(Y_\cdot^\star,Z_\cdot,\Sigma_\cdot)$, with $\gamma^\star$ defined by \eqref{eq:gamma_star}, we have
	\begin{align*}
		 \psi_\cdot \big(Y_\cdot^\star,Z_\cdot,\Gamma_\cdot,\Sigma_\cdot \big) = \min_{\gamma \in \smallertext{E}} \psi_\cdot \big(Y_\cdot^\star,Z_\cdot,\gamma,\Sigma_\cdot \big) = F_\cdot \big(Y_\cdot^\star,Z_\cdot,\Sigma_\cdot \big).
	\end{align*}
	In particular, for such a choice of $\Gamma$, the infimum is achieved in \Cref{cond:revealing} for $\Sigma_\cdot \in {\bf \Sigma}_\cdot$. One can thus select $\Sigma_\cdot^{\star, \psi} = \Sigma_\cdot$, ensuring then that $\mathfrak{b}^{\star,\psi}_t\cdot= b^{\circ}_\cdot (Y_\cdot^\star,Z_\cdot,\Sigma_\cdot)$ since by \eqref{eq:u*_FB}
	\begin{align*}
		b^{\circ}_\cdot(Y_\cdot^\star,Z_\cdot,\Sigma_\cdot) \in \argmax_{b \in \smallertext{B}_\smalltext{\cdot}(\Sigma_\smalltext{\cdot})} \big\{ f_\cdot (Y_\cdot^\star,Z_\cdot,b) \big\}.
	\end{align*}
	Finally, since $\psi \in \Psi$ satisfies in particular \Cref{cond:revealing}, the process $Y^\star$ satisfies, under the probability measure $\P^\star$ corresponding to the above optimal effort, the same SDE as in \eqref{eq:contract_FB}.
\end{proof}

\begin{remark}\label{rk2:psi=0}
		In the above {\rm BSDE}-based proof of {\rm \Cref{thm:principal}}, we impose {\rm \Cref{ass:psi_star}} on $\psi$, so that the generator in \eqref{eq:xi_2BSDE_psi} remains Lipschitz-continuous in the $y$-variable and with appropriate growth in $y$, thus ensuring that strong existence and uniqueness hold, and classical estimates yield $Y^{\star}\in \D^p$ for the same exponent $p>1$ as in {\rm \Cref{def:contract_FB}}. First, this condition is only a \emph{sufficient} condition ensuring that $(Z,\Gamma)\in \Vc^\psi$ whenever $(Z,\Sigma)\in \Vc^\circ$, the {\rm BSDE} argument may still apply under more general frameworks. Moreover, to prove the equality between the values of the \emph{contractible-volatility} and the original problem, namely $V^\circ_{\smallertext{\rm P}} = V_{\smallertext{\rm P}}$, it suffices to notice that $\psi \equiv F$ satisfies {\rm \Cref{ass:psi_star}}. 
\end{remark}

\section{Specific examples of contracts}\label{sec:contract_examples}

The contract form introduced in \Cref{def:contrat_new} remains very general, since the parametrisation can be any function $\psi \in \Psi$. While some of these conditions are usually easy to verify, it is useful to derive sufficient criteria ensuring they hold, and to provide explicit examples of functions $\psi$. First, by \Cref{cond:revealing}, we can write for some other map $\overline \psi$ 
\begin{align}\label{eq:overline_psi}
	\psi_t (x,y,z,\gamma,S)- F_t (x,y,z,S) \eqqcolon \overline \psi_t (x,y,z,\gamma,S) - \inf_{S^\smalltext{\prime} \in {\bf \Sigma}_\smalltext{t}(x)} \overline \psi_t (x,y,z,\gamma,S^\prime),
\end{align}
for all $(t,x,y,z,\gamma,S) \in [0,T] \times \Omega \times \R \times \R^d \times E \times \Sc^{\smallertext{+}}_d(\R)$. Through this construction, $\psi-F$ is non-negative for $S \in {\bf \Sigma}_t(x)$ and its infimum on $S \in {\bf \Sigma}_t(x)$ is naturally $0$, so that \Cref{cond:revealing} is automatically satisfied. 

\medskip

Next, we give a sufficient condition for the `infimum part' of \Cref{cond:wlog1}, namely
\begin{align}\label{cond:wlog_inf}
	\inf_{\gamma \in \smallertext{E}} \psi_t(x,y,z,\gamma,S) = F_t(x,y,z,S), \; \text{for all} \; (t,x,y,z,S) \in [0,T] \times \Omega \times \R \times \R^d \times \Sc^{\smallertext{+}}_d(\R).
\end{align}
The proof is immediate, we therefore omit it.
\begin{lemma}\label{lemma:lemma2}
	Let $E$ be arbitrary, and assume that we can write
	\[
	\overline\psi_t(x,y,z,\gamma,S)=\ell_t(x,y,z,S)-\eta_t(x,y,z,\gamma,S), \; \text{for all} \; (t,x,y,z,\gamma,S) \in [0,T] \times \Omega \times \R \times \R^d \times E \times \Sc^{\smallertext{+}}_d(\R),
	\]
	where $\eta$ takes values in $\R\cup\{+\infty\}$, and for any $(t,x,y,z)$, the map $\mathbf{\Sigma}_t(x)\ni S\longmapsto \ell_t(x,y,z,S)\in\R\cup\{+\infty\}$ is $\eta$-convex.\footnote{Recall that $\eta$-convexity stems from optimal transport theory, see \cite[Chapter 5, pages 54--57]{villani2009optimal}. Here, it means that for any $(t,x,y,z)$, the map $\mathbf{\Sigma}_t(x)\ni S\longmapsto \ell_t(x,y,z,S)\in\R\cup\{+\infty\}$ is not identically $+\infty$ and there exists $E\ni\gamma\longmapsto\zeta_t(x,y,z,\gamma)\in\R\cup\{+\infty,-\infty\}$ such that
	\[
	\ell_t(x,y,z,S)=\sup_{\gamma\in \smallertext{E}}\{\zeta_t(x,y,z,\gamma)-\eta_t(x,y,z,\gamma,S)\}, \; S\in\mathbf{\Sigma}_t(x).
	\]
	A function is {$\eta$}-convex if and only if it is equal to its double {$\eta$}-transform, that is
	\[
	\ell_t(x,y,z,S)=\sup_{\gamma\in \smallertext{E}}\inf_{\smallertext{S}^\smalltext{\prime}\in\smallertext{\mathbf{\Sigma}}_\smalltext{t}(x)}\{\ell_t(x,y,z,S^\prime)+\eta_t(x,y,z,\gamma,S^\prime)-\eta_t(x,y,z,\gamma,S)\}\eqqcolon \ell^{\eta\eta}_t(x,y,z,S).
	\]} Then, {\rm\Cref{cond:wlog_inf}} holds.
\end{lemma}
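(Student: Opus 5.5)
The plan is a direct computation: unfold the normalisation \eqref{eq:overline_psi}, exchange the order of the infimum over $\gamma$ and the remaining operations, and recognise the double $\eta$-transform $\ell^{\eta\eta}$ from the footnote.

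\textit{Setup.} Fix $(t,x,y,z)$ and suppress these arguments throughout. By \eqref{eq:overline_psi},
\[
\psi(\gamma,S)-F(S)=\overline\psi(\gamma,S)-\inf_{S'\in\mathbf{\Sigma}_t(x)}\overline\psi(\gamma,S').
\]
Since $F(S)$ does not depend on $\gamma$, proving \eqref{cond:wlog_inf} amounts to proving
\[
\inf_{\gamma\in E}\Big\{\overline\psi(\gamma,S)-\inf_{S'}\overline\psi(\gamma,S')\Big\}=0 .
\]

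\textit{Main computation.} I will use the sign convention under which the footnote's double transform applies verbatim: the $\eta$-transform is $\ell^\eta(\gamma)\coloneqq\inf_{S'}\{\ell(S')+\eta(\gamma,S')\}$, and then $\ell^{\eta\eta}(S)=\sup_\gamma\{\ell^\eta(\gamma)-\eta(\gamma,S)\}$. Matching this convention means reading the decomposition $\overline\psi=\ell-\eta$ with $\eta$ entering with the sign of the footnote, i.e.\ effectively $\overline\psi=\ell+\eta$; if instead one keeps $\overline\psi=\ell-\eta$ literally, one applies the same argument with $\eta$ replaced by $-\eta$ throughout. With this convention, $\inf_{S'}\overline\psi(\gamma,S')=\ell^\eta(\gamma)$. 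Hence
\[
\inf_{\gamma}\Big\{\ell(S)+\eta(\gamma,S)-\ell^\eta(\gamma)\Big\}=\ell(S)-\sup_\gamma\Big\{\ell^\eta(\gamma)-\eta(\gamma,S)\Big\}=\ell(S)-\ell^{\eta\eta}(S).
\]
By the $\eta$-convexity assumption, $\ell=\ell^{\eta\eta}$, so this difference is $0$, which is \eqref{cond:wlog_inf}.

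\textit{Main obstacle: extended-real arithmetic.} The algebra itself is trivial; the care is needed where $\ell$ or $\eta$ take the value $+\infty$.
\begin{itemize}
\item The assumption that $\ell$ is not identically $+\infty$ on $\mathbf{\Sigma}_t(x)$ gives $\ell^\eta(\gamma)<+\infty$ whenever $\eta(\gamma,\cdot)$ is finite somewhere. This excludes the indeterminate form $+\infty-(+\infty)$ in $\overline\psi-\inf\overline\psi$.
\item At points $S$ with $\ell(S)=+\infty$, the identity has to be read through the convention implicit in \eqref{eq:overline_psi}. There I would restrict attention to $S$ in the effective domain of $\ell$, which is where $\psi$ is real-valued.
\item Pulling $\ell(S)$ out of the infimum over $\gamma$ requires $\ell(S)$ to be finite, which holds on that domain.
\item I would also check the convention for $\gamma$ with $\eta(\gamma,S)=+\infty$: such $\gamma$ contribute $-\infty$ to the supremum defining $\ell^{\eta\eta}(S)$, so they do not affect it.
\end{itemize}
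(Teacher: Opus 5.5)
Your argument is correct and is exactly the ``immediate'' computation the paper omits: unfold \eqref{eq:overline_psi}, pull $\ell(S)$ out of the infimum over $\gamma$, and recognise $\ell(S)-\ell^{\eta\eta}(S)$, which vanishes on $\mathbf{\Sigma}_t(x)$ by $\eta$-convexity. Your sign remark is a necessary correction of the statement, not just a choice of convention: with the footnote's definition of $\eta$-convexity and $\overline\psi=\ell-\eta$ taken literally, the lemma fails. For instance, take $d=1$, $\mathbf{\Sigma}_t(x)=[0,1]$, $E=[0,\infty)$, $\eta(\gamma,S)=\gamma S$ and $\ell(S)=-S$; this $\ell$ is $\eta$-convex with $\zeta(1)=0$ and $\zeta=-\infty$ elsewhere, yet $\inf_{\gamma\in E}\{\psi(\gamma,S)-F(S)\}=\inf_{\gamma\geq 0}(1+\gamma)(1-S)=1-S>0$ for $S<1$. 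So your ``replace $\eta$ by $-\eta$ throughout'' must include assuming $(-\eta)$-convexity. This is harmless in the paper's forcing example, where $-\eta(\gamma,S)=\eta(-\gamma,S)+2\|\gamma\|^2$ makes the two notions coincide.
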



It remains to check that the infimum is attained in \Cref{cond:wlog_inf}, which is the case for instance if $E=\Sc_d(\R)$, $\eta$ is taken as proportional to the standard scalar product there---in which case $\eta$-convexity is simply standard convexity plus being proper and lower--semi-continuous---, and $\ell$ is in addition strictly convex in $\gamma$.

\medskip
While the above highlights sufficient conditions on the function $\overline{\psi}$ from which one can construct an appropriate function $\psi$ through \eqref{eq:overline_psi}, the construction remains rather implicit. We therefore present in the following sections explicit functions $\psi \in \Psi$.

\subsection{Standard cases}\label{ss:case_duality}

{In this section, we first consider two \textit{standard cases}: when only the drift is controlled, and when the volatility is controlled but \Cref{ass:duality} holds. Both cases have been fully solved in the existing literature, and our objective here is merely to exhibit a suitable function $\psi$ so as to recover the optimal contract form already established therein.
	
\medskip
First, in a framework where only the drift is controlled, it is well-known that the optimal contract takes the form $\xi = U^{(\smalltext{-}1)}\big(Y^{y,Z}_\smallertext{T}\big)$ for a constant $y \in \R$ and an appropriate process $Z$, namely satisfying the usual integrability conditions and such that the process $Y$ is the well-defined solution of the following SDE
\begin{align*}
	Y^{y,Z}_t &= y - \int_0^t F_s \big(Y^{y,Z}_s, Z_s,\widehat \sigma^2_s \big) \drm s 
	+ \int_0^t Z_s \cdot \drm X_s, \; t \in [0,T], \; \P\textnormal{--a.s.}
\end{align*}
Note that in such framework, the density of the quadratic variation is uncontrolled, meaning that for all $(t,x) \in [0,T] \times \Omega$, the set $\Sigma_t(x)$ is reduced to the singleton $\{ [\sigma \sigma^\top]_t (x)\}$. Consequently, the `constrained' Hamiltonian $F$ defined by \eqref{eq:hamiltonian_constrained} boils down to the usual Hamiltonian for drift control
\begin{align*}
	F_t (x,y,z,S) = \sup_{b \in \smallertext{B}} \big\{ \sigma_t (x)  \lambda_t(x,b) \cdot z - c_t(x,b) - k_t(x,b) y \big\}, \; (t,x,y,z) \in [0,T] \times \Omega \times \R \times \R^d, \; S \in \Sigma_t(x),
\end{align*}
and is thus independent of the $S$-variable. It is straightforward to see that taking $\psi \equiv F$ in the general contract form introduced in \Cref{def:relevant_contracts} allows to recover the above contract. This choice naturally satisfies \Cref{cond:revealing}, as well as \Cref{cond:wlog1} with minimum achieved for any $\gamma \in E$. However, this natural choice does not satisfy \Cref{cond:wlog2}, as the supremum of $\psi$ is clearly $F$ instead of $\infty$. Nevertheless, the existing literature on principal--agent problems with drift control actually always assumes weak uniqueness for the controlled SDE \eqref{eq:dynamic_SDE_P}. Therefore, the optimality of the contract form can be proved using the BSDE-approach as in \Cref{sss:BSDE_proof} and, as highlighted before, the first part of \Cref{cond:wlog2} is actually not needed for this approach. We emphasise that, without the assumption of weak uniqueness, the representation of the optimal contract would involve an additional term, namely an orthogonal martingale, and it remains unclear in general settings whether such a term can be taken to be zero without loss of generality. For further details on this technical point, we refer to \cite[Remarks 2.3 and 3.3]{chiusolo2026new}, as well as the general approach in \citeayn{krsek2026randomisation}.} 

\medskip
In a framework with volatility control, and where \Cref{ass:duality} is satisfied, the approach introduced in \cite{cvitanic2018dynamic} is valid, implying that the original contract form \eqref{eq:contract_CPT} is optimal. This form can be recovered from our more general form of contracts introduced in \Cref{def:relevant_contracts} by taking 
\begin{align}\label{eq:psi_contract_CPT}
	\psi_t(x,y,z,\gamma,S) = H_t (x,y,z,\gamma) - \dfrac12 {\rm Tr} \big[ \gamma S \big], \; (t,x,y,z,\gamma,S) \in [0,T] \times \Omega \times \R \times \R^d \times \Sc_d(\R) \times\Sc^{\smallertext{+}}_d(\R).
\end{align}
Indeed, one can first compute that, by definition of $H$ or directly using \eqref{eq:link_hamiltonian}
\begin{align*}
	\inf_{S \in {\bf \Sigma}_\smalltext{t}(x)} \big\{ \psi_t(x,y,z,\gamma,S)  - F_t (x,y,z,S) \big\}
	= H_t (x,y,z,\gamma) - \sup_{S \in {\bf \Sigma}_\smalltext{t}(x)} \bigg\{ F_t (x,y,z,S) + \dfrac12 {\rm Tr} \big[ \gamma S \big] \bigg\}
	= 0,
\end{align*}
thus \Cref{cond:revealing} is satisfied. Still using the definition of $H$, we also have that for all $S^\prime \in {\bf \Sigma}_t(x)$
\begin{align*}
	H_t(x,y,z,\gamma) \geq F_t (x,y,z,S^\prime) + \dfrac12 {\rm Tr} \big[ \gamma S^\prime \big],
\end{align*}
so that provided that for any $(t,x,S)$, we can always find $S^\circ_t(x)\in{\bf \Sigma}_t(x)$ such that $S^\circ_t(x)\prec S$ $($or $S\prec S^\circ_t(x))$, where $\prec$ is the standard---strict---partial order on the set of symmetric semi-definite, $d\times d$ matrices with real entries, we have
\begin{align*}
	\sup_{\gamma \in \smallertext{\Sc}_\smalltext{d}(\R)} \psi_t(x,y,z,\gamma,S) \geq F_t(x,y,z,S^\prime) + \dfrac12 \sup_{\gamma \in \smallertext{\Sc}_\smalltext{d}(\R)} \big\{  {\rm Tr} \big[ \gamma (S^\prime-S) \big] \big\} = + \infty.
\end{align*}
Finally, under \Cref{ass:duality}, there exists a Borel-measurable function $\gamma^\star$ such that 
\begin{align*}
	\psi_t(x,y,z,\gamma^\star_t(x,y,z,S),S) = H_t  \big(x, y, z, \gamma^\star_t(x,y,z,S) \big) - \dfrac12 {\rm Tr} \big[ \gamma^\star_t(x,y,z,S) S \big] = F_t(x,y,z,S).
\end{align*}
From the two equations above, we conclude that $\psi$ also satisfies \Cref{cond:wlog1,,cond:wlog2}.

\medskip
Our objective is now to construct an explicit function $\psi$ for the general case, so as to ensure the existence of an optimal contract when the volatility is controlled but \Cref{ass:duality} does not hold. In the following sections, we present two natural choices for $\psi$, derived respectively from the `contractible-volatility' and 2BSDE approaches.

\subsection{Duality contracts}\label{ss:case_hamiltonian}

When \Cref{ass:duality} does not hold, one can \textit{slightly} correct the original contract form \eqref{eq:contract_CPT} introduced in \cite{cvitanic2018dynamic}. We have seen in the previous section that a natural choice for $\psi$ is \eqref{eq:psi_contract_CPT},
or equivalently in the form \eqref{eq:overline_psi} with $\overline \psi$ defined as
\begin{align*}
	\overline \psi_t(x,y,z,\gamma,S) \coloneqq - F_t (x,y,z,S) - \dfrac12 {\rm Tr} \big[ \gamma S \big], \; (t,x,y,z,\gamma,S) \in [0,T] \times \Omega \times \R \times \R^d \times \Sc_d(\R) \times\Sc^{\smallertext{+}}_d(\R).
\end{align*}
This construction works under \Cref{ass:duality} since, in this case, $F$ coincides (abusing notations slightly) with its bi-conjugate $F^{\star \star}$, defined for all $(t,x,y,z) \in [0,T] \times \Omega \times \R \times \R^d$ by
\begin{align}\label{eq:bi-conjugate}
	F^{\star \star}_t ( x, y, z,S) \coloneqq &\ \inf_{\gamma \in \smallertext{\Sc}_\smalltext{d}(\R)} \bigg\{ H_t ( x, y, z, \gamma) - \dfrac12 {\rm Tr} \big[ \gamma S \big] \bigg\}.
\end{align} 
While the above link breaks when \Cref{ass:duality} does not hold, the intuition is to define $\overline \psi$ with $F^{\star \star}$ introduced above instead of $F$, which will then naturally coincide with its bi-conjugate, restoring duality.

\medskip
	
With this in mind, we now specify the parametrisation function $\psi$ via \eqref{eq:overline_psi}, with $\overline{\psi}$ defined using $F^{\star \star}$ in \eqref{eq:bi-conjugate} above
\[
\overline{\psi}_t(x,y,z,\gamma,S)\coloneqq -F^{\star \star}_t(x,y,z,S)-\frac12\mathrm{Tr}[\gamma S], \; (t,x,y,z,\gamma,S) \in [0,T] \times \Omega \times \R \times \R^d \times E \times \Sc^{\smallertext{+}}_d(\R).
\]
Since $H$ is convex---as the convex conjugate of $(-2F)$---it coincides with its bi-conjugate, and we deduce
\begin{align*}
	\inf_{\smallertext{S}^\smalltext{\prime} \in \smallertext{\bf \Sigma}_\smalltext{t}(x)} \overline \psi_t (x,y,z,\gamma,S^\prime) 
	= \inf_{\smallertext{S}^\smalltext{\prime} \in \smallertext{\bf \Sigma}_\smalltext{t}(x)} \bigg\{ - \inf_{\gamma \in \smallertext{\Sc}_\smalltext{d}(\R)} \bigg\{ H_t ( x, y, z, \gamma) - \dfrac12 {\rm Tr} \big[ \gamma S \big] \bigg\} - \frac12\mathrm{Tr}[\gamma S^\prime] \bigg\}
	= - H_t ( x, y, z, \gamma),
\end{align*}
for all $(t,x,y,z,\gamma,S) \in [0,T] \times \Omega \times \R \times \R^d \times E \times \Sc^{\smallertext{+}}_d(\R)$, so that
\begin{align*}
	\psi_t (x,y,z,\gamma,S) 
	&= F_t (x,y,z,S) -F^{\star \star}_t(x,y,z,S)-\frac12\mathrm{Tr}[\gamma S] + H_t ( x, y, z, \gamma).
\end{align*}

We are thus considering terminal payments of the form $\xi = U^{(\smalltext{-}1)}(Y_\smallertext{T})$ with
\begin{align}\label{eq:contract_Dylan}
	Y_\smallertext{T} &= y_0 - \int_0^\smallertext{T}  H_s( Y_s, Z_s, \Gamma_s)  \drm s 
	+ \int_0^\smallertext{T} Z_s \cdot \drm X_s 
	+ \dfrac12 \int_0^\smallertext{T} {\rm Tr} \big[ \Gamma_s \drm [ X ]_s \big]+\int_0^T\big( F^{\star\star} - F\big)_s \big( Y_s, Z_s, \widehat \sigma^2_s\big)\mathrm{d}s.
\end{align}
By construction of $\psi$ via \eqref{eq:overline_psi}, \Cref{cond:revealing} is naturally satisfied. Then, \Cref{lemma:lemma2} also applies here using standard convex duality. However, if $H$ fails to be strictly convex in $\gamma$, the infimum over $\gamma$ may not be attained. In this case, one may replace $H$ by any other strictly convex function, say $\overline H_t(x,y,z,\gamma)$,\footnote{A natural choice here would be a quadratic perturbation of the form $\overline H_t(x,y,z,\gamma)\coloneqq  H_t(x,y,z,\gamma)+\frac\eps2\mathrm{Tr}[\gamma\gamma^\top],\; \eps>0$.} and take instead
\[
\overline{\psi}_t(x,y,z,\gamma,S)=-\inf_{\gamma^\smalltext{\prime} \in\smallertext{\Sc}_\smalltext{d}(\R)}\bigg\{\overline H_t(x,y,z,\gamma^\prime)-\frac12\mathrm{Tr}[\gamma^\prime S]\bigg\}-\frac12\mathrm{Tr}[\gamma S]=-\overline H^\star_t(x,y,z,S)-\frac12\mathrm{Tr}[\gamma S].
\]
This leads to
\[
\psi_t(x,y,z,\gamma,S)\coloneqq  F_t (x,y,z,S) + \overline H^{\star\star}_t(x,y,z,S)-\overline H^\star_t(x,y,z,S)-\frac12\mathrm{Tr}[\gamma S],
\]
so that the infimum over $\gamma$ is still $0$ by Fenchel duality, but it is also attained by strict convexity. Finally, using the standard partial order on symmetric matrices, and assuming that for any $(t,x,S)$, we can always find $S_t^\circ(x)\in{\bf \Sigma}_t(x)$ such that $S^\circ_t(x)\prec S$ $($or $S\prec S_t^\circ(x))$, we obtain the desired behaviour at infinity for $\psi$. Under these assumptions, $\psi \in \Psi$ and the contract form \eqref{eq:contract_Dylan} above is optimal. 

\medskip

To interpret the new form of contract \eqref{eq:contract_Dylan}, recall first that, by \eqref{eq:duality2}, $F_t (x,y,z,S ) - F^{\star \star}_t (x,y,z,S ) \leq 0,$ with equality only if \Cref{ass:duality} is satisfied, so this term corresponds to an extra cost for the principal in the contract form \eqref{eq:contract_Dylan} compared to the original contract form \eqref{eq:contract_CPT}. The economic interpretation of this extra cost is as follows: in general, the only way the agent can always reach the value $F^{\star \star} $ when optimising is if he is allowed to randomise his controls, unless of course strong duality holds. Absent randomisation, the agent can only be guaranteed to reach the smaller value $F$. Hence, to properly align the agent's incentives, the principal must compensate him for this extra duality gap.
More precisely, write first
\[
F^{\star \star}_t ( x, y, z,S) =\inf_{\gamma \in \smallertext{\Sc}_\smalltext{d}(\R)}\sup_{\smallertext{S}^\smalltext{\prime}\in \smallertext{\mathbf{\Sigma}}_\smalltext{t}(x)}\bigg\{F_t ( x, y, z,S^\prime)+\frac12\mathrm{Tr}\big[\gamma(S^\prime-S)\big]\bigg\},
\]
and formally consider a randomised version of the above: assume for simplicity that $\mathbf{\Sigma}_t(x)$ is convex and compact (so that in particular the restriction of $\Sc_d(\R)$ to it is still a Polish space), let $\mathfrak S_t(x)$ and $\mathfrak S_d(\R)$ be the sets of probability measures on $\mathbf{\Sigma}_t(x)$ and $\Sc_d(\R)$ respectively, and define
\begin{align*}
\overline F^{\star\star}_t(x,y,z,S) &\coloneqq  \inf_{\gamma \in \smallertext{\mathfrak{S}}_\smalltext{d}(\R)}\sup_{\smallertext{S}^\smalltext{\prime}\in \smallertext{\mathfrak{S}}_\smalltext{t}(x)}\int_{\smallertext{\Sc}_\smalltext{d}(\R)}\int_{\smallertext{\mathbf{\Sigma}}_\smalltext{t}(x)}\bigg\{F_t ( x, y, z,\mathrm{s})+\frac12\mathrm{Tr}\big[g(S-\mathrm{s})\big]S^\prime(\mathrm{d}\mathrm{s})\gamma(\mathrm{d}g)\bigg\}.
\end{align*}
The function on the right-hand side is clearly bilinear with respect to $(\gamma,S^\prime)$, and since $\mathfrak S_t(x)$ is convex and compact (for the weak topology) and $\mathfrak S_d(\R)$ is convex, we can apply Sion's minimax theorem to deduce
\begin{align*}
\overline F^{\star\star}_t(x,y,z,S) &\coloneqq  \sup_{\smallertext{S}^\smalltext{\prime}\in \smallertext{\mathfrak{S}}_\smalltext{t}(x)}\inf_{\gamma \in \smallertext{\mathfrak{S}}_\smalltext{d}(\R)}\int_{\smallertext{\mathbf{\Sigma}}_\smalltext{t}(x)}\int_{\smallertext{\Sc}_\smalltext{d}(\R)}\bigg\{F_t ( x, y, z,\mathrm{s})+\frac12\mathrm{Tr}\big[g(S-\mathrm{s})\big]S^\prime(\mathrm{d}\mathrm{s})\gamma(\mathrm{d}g)\bigg\}\\
&= \inf_{\gamma \in \smallertext{\mathfrak{S}}_\smalltext{d}(\R)}\int_{\smallertext{\mathbf{\Sigma}}_\smalltext{t}(x)}F_t ( x, y, z,S)\gamma(\mathrm{d}g)= F_t(x,y,z,S).
\end{align*}
Strong duality always holds in this case (see \citeayn[Equation (1.11)]{buckdahn2014value} for similar arguments).

\subsection{Forcing contracts}\label{ss:case_forcing}

Alternatively, the `contractible‑volatility' approach developed in \cite{chiusolo2026new} makes natural the consideration of `forcing'---or `penalisation'---contracts, in the sense that such contracts should strongly incentivise the agent to choose efforts in order to achieve an optimal quadratic variation from the principal's point of view. With this in mind, we propose the following contracts $\xi = U^{(\smalltext{-}1)} (Y_\smallertext{T})$, where $Y$ is solution to 
\begin{align}\label{eq:contract_new_simple}
	Y_t = y_0 - \int_0^t F_s \big(Y_s,Z_s, \widehat \sigma^2_s \big) \drm s 
	- \int_0^t \| \widehat \sigma^2_s-\Gamma_s \|^2 \drm s
	+ \int_0^t Z_s \cdot \drm X_s 
	+ \int_0^t \inf_{\smallertext{S} \in \smallertext{\bf \Sigma}_\smalltext{s} } \big\| S - \Gamma_s \big\|^2 \drm s,
\end{align}
indexed by a pair of processes $(Z,\Gamma)$, satisfying appropriate integrability conditions derived from \Cref{def:contrat_new}, and where $\|A\|^2= {\rm Tr} [A^\top A]$ for any $A \in \Sc_d(\R)$. The contract form \eqref{eq:contract_new_simple} coincides with the general form \eqref{eq:xi_2BSDE_psi} under the specification that $E \coloneqq  \Sc_d(\R)$ and
\begin{align*}
	\psi_t(x,y,z,\gamma,S) \coloneqq F_t(x,y,z,S) + \| S-\gamma \|^2 - \inf_{\smallertext{S}^\smalltext{\prime} \in \smallertext{\bf \Sigma}_{\smalltext{t}}(x) } \big\| S^\prime - \gamma \big\|^2,
\end{align*}
for all $(t,x,y,z,\gamma,S) \in [0,T] \times \Omega \times \R \times \R^d \times\Sc_d(\R) \times \Sc^{\smallertext{+}}_d(\R) $, or equivalently using \eqref{eq:overline_psi}, $\overline{\psi}_t(x,y,z,\gamma,S)\coloneqq \| S-\gamma\|^2= \| S\|^2+ \| \gamma\|^2-2\mathrm{Tr}[S\gamma^\top].$ First, it is clear by definition that $\psi$ satisfies \Cref{cond:revealing}. Moreover, if $\gamma \in {\bf \Sigma}_t(x)$, then the infimum in \Cref{cond:revealing} is achieved for $S = \gamma$. To verify \Cref{cond:wlog1,cond:wlog2}, we notice that
	\begin{align*}
		\sup_{\gamma \in \Sc_\smalltext{d}(\R)} \psi_t(x,y,z,\gamma,S) 
		&\geq F_t(x,y,z,S) + \sup_{\gamma \in \smallertext{\Sc}_\smalltext{d}(\R)} \Big\{\|S-\gamma\|^2-\|S^{\prime\prime} - \gamma\|^2\Big\} \\
		&=F_t(x,y,z,S) + \|S\|^2-\|S^{\prime\prime}\|^2+\sup_{\gamma \in \smallertext{\Sc}_\smalltext{d}(\R)} \big\{2{\rm Tr}[(S^{\prime\prime}-S)^\top \gamma]\big\} \\
		&\geq F_t(x,y,z,S) +\|S\|^2-\|S^{\prime\prime}\|^2+2 \sup_{c \in \R} \big\{c \|S^{\prime\prime}-S\|^2\big\} = +\infty, 
	\end{align*}
	where $S^{\prime\prime}\in {\bf \Sigma}_{{t}}(x)$ and $S^{\prime\prime}\neq S$, as well as, for $S \in  {\bf \Sigma}_t(x)$
\begin{align*}
	\inf_{\gamma \in \Sc_\smalltext{d}(\R)} \psi_t(x,y,z,\gamma,S) 
	= F_t(x,y,z,S) + \inf_{\gamma \in \Sc_\smalltext{d}(\R)} \bigg\{ \| S-\gamma \|^2 - \inf_{S^\smalltext{\prime} \in {\bf \Sigma}_t(x) } \big\| S^\prime - \gamma \big\|^2 \bigg\}
	= F_t(x,y,z,S), 
\end{align*}
achieved again for $\gamma = S$. Alternatively, one can check that this corresponds to $\eta$-convexity in the sense of \Cref{lemma:lemma2} with $\eta_t(x,y,z,\gamma,S)\coloneqq 2\mathrm{Tr}[S\gamma^\top]-\| \gamma\|^2$ and the choice $\ell(S)=\| S\|^2$, which is $\eta$-convex since
\[
\| S\|^2=\sup_{\gamma\in \smallertext{\Sc}_\smalltext{d}(\R)}\big\{-2\|\gamma\|^2-2\mathrm{Tr}[S\gamma^\top]+\| \gamma\|^2\big\}.
\]
We highlight that this specification for $\psi$ also satisfies \Cref{ass:psi_star}, so the BSDE approach can be used to prove the optimality of this contract form.

\medskip
The contract form \eqref{eq:contract_new_simple} is quite natural when following the `contractible‑volatility' approach. Indeed, in the classical literature on contract theory, the usual technique to show that the \textit{first-best} value can be achieved when the principal observes the agent's control is to exhibit \textit{forcing} contracts, namely contracts that strongly penalise the agent if his control deviates from the effort recommended by the principal. Applying this idea here, with the `observable' control being the quadratic variation, naturally leads to the contract form \eqref{eq:contract_new_simple}, where the parameter $\Gamma$ chosen by the principal represents the density of the desired quadratic variation. More precisely, assuming to simplify that $\Gamma_t \in {\bf \Sigma}_t(X)$ for all $t \in [0,T]$, then the contract can be simplified as follows
\begin{align*}
	Y_t = y_0 - \int_0^t F_s \big(Y_s,Z_s, \widehat \sigma^2_s \big) \drm s 
	+ \int_0^t Z_s \cdot \drm X_s
	- \int_0^t \| \widehat \sigma^2_s-\Gamma_s \|^2 \drm s.
\end{align*}
Comparing with the usual form of contract for drift control, the main difference is the last integral term, which is non-negative and thus clearly penalising the agent whenever the density $\widehat \sigma^2$ of the observable quadratic variation deviates from the recommended one announced by the principal via the parameter $\Gamma$.

\section{Examples and counter-examples}\label{sec:examples}

In this section, we revisit some of the examples and counter-examples introduced in the recent literature on principal--agent problems with volatility control. More precisely, in \Cref{ss:example_ok} we highlight that most application-related examples in the literature do in fact satisfy \Cref{ass:duality}; see \citeayn{cvitanic2017moral}, \citeayn{aid2022optimal}, and \citeayn{baldacci2022governmental} described below. In contrast, the following section recalls the counter-example proposed in \cite[Section~4.3]{chiusolo2026new}, where \Cref{ass:duality} is not satisfied, and hence the contract form \eqref{eq:contract_CPT} does not allow one to achieve the `contractible-volatility' value, as demonstrated in \cite{chiusolo2026new}. We nevertheless show that the new contract form we propose attains this value. Finally, beyond the portfolio management example, \cite{cvitanic2017moral} introduces a relatively general model---under CARA utility functions and within a Markovian framework---which was subsequently extended in \cite{cvitanic2018dynamic}. However, despite additional convexity assumptions compared to \cite{cvitanic2018dynamic}, this more restrictive model may still fail to satisfy \Cref{ass:duality}, as illustrated by a counter-example in \Cref{ss:other}.
	
\subsection{Examples in the literature}\label{ss:example_ok}

We first recall several applied models from the literature and show that they satisfy \Cref{ass:duality}, thereby confirming that the use of the original contract form \eqref{eq:contract_CPT} in these works is fully justified and that their results remain valid.

\subsubsection{Delegated portfolio management}\label{sss:CPT2017}

%

In the example of \cite[Section~2]{cvitanic2017moral}, the output process admits the following linear dynamics
\begin{align*}
	\drm X_t = \big(v_1 \beta^1_t +v_{2} \beta^2_t \big)\drm t + \beta^1_t \drm W^1_t+ \beta^2_t \drm W^2_t, \; t \in [0,T],
\end{align*}
where $\beta \coloneqq (\beta^1,\beta^2)$ denotes the agent's control, taking values in $\R^2$, $W^1$ and $W^2$ are two independent Brownian motions, and $ (v_1,v_2)\in \R^2$ are fixed parameters. The agent's preferences are described by a CARA utility function and the following cost of effort, with constant parameters $(a_1,a_2)\in \R^2$ and $ (k_1,k_2)\in (0,+\infty)^2$,
\begin{align*}
	k(b) = \dfrac12 k_1 |b^1-a_1|^2 + \dfrac12 k_2 |b^2-a_2|^2, \; b \coloneqq (b^1,b^2)\in \R^{2}.
\end{align*}
With this framework in mind, we are led to define for all $z \in \R$ and $b \coloneqq (b^1,b^2) \in \R^2$
\begin{align*}
	f(z,b) \coloneqq z(v_1 b^1+v_2 b^2)-\dfrac12 k_1 |b^1-a_1|^{2}-\dfrac12^k_2 |b^2-a_2|^{2}.
\end{align*}
To introduce the agent's `constrained' Hamiltonian $F$, we first note that the quadratic variation of $X$ satisfies
\begin{align*}
	\drm [X]_t =  \big( |\beta_t^1|^2 + |\beta_t^2|^2 \big) \drm t, \; t \in [0,T].
\end{align*}
In other words, for $S \in \R_{\smallertext{+}}$ describing the density of the quadratic variation fixed by the principal, the agent is constrained to choose his effort $\beta \coloneqq (\beta^1,\beta^2)$ taking values on the circle of radius $\sqrt S$, characterised by
\[    
g(S,b)\coloneqq |b^1|^{2}+|b^2|^{2}-S=0, \; \text{for} \; b \coloneqq (b^1,b^2)\in \R^{2}, \; S \in \R_{\smallertext{+}}.
\] 
The agent's `constrained' Hamiltonian $F$ is thus defined for $(z,S) \in \R \times \R_{\smallertext{+}}$ by
\begin{align*}
	F(z,S)\coloneqq \sup_{\{b \in\R^\smalltext{2} : g(S,b)=0\}} \{ f(z,b)\}.
\end{align*}
To solve this constrained optimisation problem, we first introduce the corresponding Lagrangian
\begin{align*}
	\mathcal L(z,S,b,\lambda) \coloneqq f(z,b)-\lambda g(S,b), \; \text{for} \; (z, S, b, \lambda) \in \R \times \R_{\smallertext{+}} \times \R^{2} \times \R.
\end{align*}
Computing the partial derivatives of \(\mathcal L\) with respect to the variables \( (b^1,b^2,\lambda) \) gives the following first-order conditions
	\begin{align}\label{eq:FOC}
		z v_1 - k_1(b^1-a_1)-2\lambda b^1 =0, \; 
		z v_2 - k_2(b^2-a_2)-2\lambda b^2 =0, \;
		|b^1|^{2}+|b^2|^{2}-S = 0.
	\end{align}
Solving the first two equations for $(b^1,b^2) \in \R^2$, we get
\begin{equation}\label{eq:v_1v_2-lambda}
	b^{1,\star}(z,\lambda)=\dfrac{k_1 a_1+z v_1}{k_1+2\lambda},\;
	b^{2,\star}(z,\lambda)=\dfrac{k_2 a_2+z v_2}{k_2+2\lambda}.
\end{equation}
To ensure that this point is a maximiser, one can compute the second-order condition and derive the admissible range for \(\lambda\), namely $\Lambda\coloneqq \big(-1/2\min\{k_1,k_2\},+\infty\big).$ Now, plugging \eqref{eq:v_1v_2-lambda} in \eqref{eq:FOC}, we get 
\begin{equation}\label{eq:impl-eq}
	\phi (z,\lambda) = S, \; \text{with} \; \phi (z,\lambda) \coloneqq \bigg|\dfrac{k_1 a_1+z v_1}{k_1+2 \lambda} \bigg|^{2}+ \bigg|\dfrac{k_2 a_2+z v_2}{k_2+2\lambda} \bigg|^{2}, \; (z,\lambda,S) \in \R \times \R \times \R_{\smallertext{+}}.
\end{equation}
Differentiating $\phi (z,\lambda)$ with respect to $\lambda \in \R$, we obtain
\begin{align*}
	\partial_\lambda \phi(z, \lambda)=
	-\frac{4 |k_1 a_1+z v_1|^2}{(k_1+2\lambda)^{3}}
	-\frac{4 |k_2 a_2+z v_2|^2}{(k_2+2\lambda)^{3}}.
\end{align*}
Every denominator is strictly positive on \(\Lambda\); hence $\phi(z, \lambda)$ is decreasing with respect to $\lambda \in \Lambda$, with
\begin{align*}
	\lim_{\lambda\to\smalltext{-}1/2\mathrm{min}\{k_\smalltext{1},k_\smalltext{2}\}} \phi(z,\lambda)=+\infty,\;
	\lim_{\lambda\to+\infty} \phi(z,\lambda)=0.
\end{align*}
By continuity and strict monotonicity, we deduce that for fixed $(z,S) \in \R \times \R_{\smallertext{+}}$, there exists a unique solution $\lambda^\star(z,S)\in\Lambda$ to the implicit equation \eqref{eq:impl-eq}. The (unique) admissible optimal effort is thus given by
\begin{align*}
	b^\star(z,S) \coloneqq \big( b^{1,\star}(z,\lambda^\star(z,S)), \; b^{2,\star}(z,\lambda^\star(z,S)) \big), \; (z,S) \in \R \times \R_{\smallertext{+}},
\end{align*}
and the constrained Hamiltonian can be computed as $F(z,S) = f(z,b^\star(z,S))$.
To investigate the curvature of $F$, note that the parameter $S$ appears inside $\mathcal L$ only linearly as $\lambda S$, so the equality--constrained envelope theorem yields
\begin{align*}
	\partial_\smallertext{S} F(z,S)=\lambda^\star(z,S), \; \text{and} \; \partial_{\smallertext{S}\smallertext{S}} F(z,S)= \partial_S \lambda^\star(z,S) \; (z,S) \in \R \times \R_{\smallertext{+}}.
\end{align*}
To compute $\partial_S \lambda^\star$, one can differentiate \eqref{eq:impl-eq} to obtain $\partial_\smallertext{S} \lambda^\star(z,S)= 1/{\partial_\lambda \phi(z,\lambda)}, \; (z,S) \in \R \times \R_{\smallertext{+}}.$ By previous computation, $\partial_\lambda \phi$ is negative, implying that $\partial_{\smallertext{S}\smallertext{S}} F(z,S) \leq 0$. This shows that $F$ is concave in $S$.

\subsubsection{Green bonds investment}\label{sss:BP2022}

We now study the investment model introduced by \citeayn{baldacci2022governmental}, which differs from the delegated portfolio management example in \Cref{sss:CPT2017} in that it allows a control of arbitrary dimension $m$ and correlated Brownian motions. In this setting, the agent controls the output process through $\beta \coloneqq (\beta_1,\dots,\beta_m)$,
\begin{align*}
	\drm X_t &= (\beta_t^\top v)\drm t + \beta_t^\top \drm W_t,
\end{align*}
where $v \coloneqq (v_1,\dots,v_m)$ denote the drift loadings and $W \coloneqq (W^1,\dots,W^m)$ is an $m$-dimensional Brownian motion with $\drm [W]_t = C\drm t$, for $C \in \Sc_m^{\smallertext{+}}(\mathbb{R})$ such that $C_{ii}=1$ and $|C_{ij}|\le 1$. Using the notations from the previous section and the cost functions from \cite{baldacci2022governmental}, we define
\begin{align*}
	f(z,b) \coloneqq  z\, v^\top b - \frac12 (b-a)^\top K (b-a),\;
	g(S,b) \coloneqq b^\top C b - S, \; (z,S,b) \in \R \times \R_+ \times \R^m,
\end{align*}
where $a \coloneqq (a_1,\dots,a_m)$ are the cost targets, and $K \coloneqq \mathrm{diag}(k_1,\dots,k_m) \succ 0$ the cost weights. The first-order conditions from the associated Lagrangian yield
\begin{align*}
	z v - K (b-a) - 2\lambda C b = 0
	\Longleftrightarrow
	(K + 2\lambda C) b = K a + z v.
\end{align*}
Define $M(\lambda) \coloneqq K + 2\lambda C$ and $r \coloneqq K a + z v$. Then $ b^\star(z,\lambda) = M(\lambda)^{-1} r.$ To ensure $M(\lambda)\succ 0$, set $G \coloneqq K^{-1/2} C K^{-1/2} \succ 0$ and let $\kappa_{\rm max}$ be the largest eigenvalue of $G$. We have
\begin{align*}
	M(\lambda)\succ 0 \Longleftrightarrow K^{-1/2} M(\lambda) K^{-1/2} = \mathrm{I} + 2\lambda G \succ 0
	\Longleftrightarrow 1 + 2\lambda \kappa_i > 0, \;  \forall i\in\{1,\dots,m\}
	\Longleftrightarrow \lambda > -\frac{1}{2\kappa_\smalltext{\rm max}}.
\end{align*}
Hence $M(\lambda)\succ 0$ if and only if $\lambda \in \Lambda\coloneqq\big(-1/(2\kappa_{\rm max}), \infty\big)$. Define $\phi(z,\lambda)\coloneqq b^{\star\top} C\,b^\star= r^\top M(\lambda)^{-1} C\, M(\lambda)^{-1} r,$ and diagonalise $G$. Since $M(\lambda)=K^{1/2}(\mathrm{I}+2\lambda G)K^{1/2}$, we get
$M(\lambda)^{-1}=K^{-1/2}(\mathrm{I}+2\lambda G)^{-1}K^{-1/2}$.
Let $G=UDU^\top$ with $U$ orthonormal and $D=\mathrm{diag}(\kappa_1,\dots,\kappa_m)$.
Set $q\coloneqq K^{-1/2}r$, $\hat q\coloneqq U^\top q$. Then for $\lambda \in \Lambda$
\begin{align*}
	\phi(z,\lambda)= q^\top (\mathrm{I}+2\lambda G)^{-1} G (\mathrm{I}+2\lambda G)^{-1} q = \sum_{i=1}^m \frac{\kappa_i}{(1+2\lambda \kappa_i)^2}\,\hat q_i^2,
	\text{ and } \; \partial_\lambda \phi(z,\lambda)= \sum_{i=1}^m \left(-\frac{4\kappa_i^2}{(1+2\lambda \kappa_i)^3}\right)\hat q_i^2,
\end{align*}
hence $\phi$ is decreasing with respect to $\lambda$ on its domain. Moreover, since $\kappa_i/(1+2\lambda\kappa_i)^2\longrightarrow 0$ as $\lambda\longrightarrow+\infty$ and $\kappa_{\rm max}/(1+2\lambda\kappa_{\rm max})^2\longrightarrow+\infty$ when $\lambda\downarrow -1/(2\kappa_{\rm max})$, we deduce the following limits
\begin{align*}
	\lim_{\lambda\to+\infty}\phi(z,\lambda)=0, \; \text{and} \; \lim_{\lambda\downarrow-1/(2\kappa_\smalltext{\rm max}}\phi(z,\lambda)=+\infty.
\end{align*}
Since $\phi(z,\lambda)$ is continuous and strictly decreasing on $\Lambda$, for any $S>0$ there is a unique solution $\lambda^\star(z,S)$ to $\phi(z,\lambda)=S$. Following the same steps as in the delegated portfolio case, we conclude that $F$ is concave in $S$, verifying \Cref{ass:duality}.

\subsubsection{Demand--response programs in electricity markets }\label{sss:APT2022}

We conclude this section with the study of the demand--response model introduced by \citeayn{aid2022optimal}. In this model, the dynamics of the (one-dimensional) output process is given by
\begin{align*}
	\drm X_t = \sum_{i=1}^n \Big( -\alpha^i_{t} \drm t + \sigma_i \sqrt{\beta^i_{t}} \drm W_{t}^i \Big), \; t \in [0,T],
\end{align*}
with $W \coloneqq (W^1,\dots,W^n)$ a $n$-dimensional Brownian motion and $(\sigma_1,\dots,\sigma_n)\in \big(\R^\star_{+}\big)^n$, and is controlled by the agent through the pair of processes  $(\alpha,\beta)$, taking values in $A \coloneqq [0,\bar{a}]^n$ and $ B \coloneqq (0,1]^n$ respectively, with $\bar{a} >0$. The agent's cost of effort on the drift and volatility are respectively denoted by $c_1(a)$ for $a \in A$ and $c_2(b)$ for $b \in B$; in particular
\begin{align*}
	c_2(b)=\sum_{i=1}^n \frac{\sigma_i^2}{k_i}\big(b_i^{-1}-1\big), \; b \coloneqq (b_1,\dots,b_n)\in (0,1]^n,  \; (k_1,\dots,k_n)\in \big(\R^\star_{+}\big)^n.
\end{align*}
We first note that $x\longmapsto x^{-1}$ is strictly convex on $(0,1]$, so $c_2$ is convex on $B$. For later use, we define the maximal attainable radius $S_{\mathrm{max}}\coloneqq \sum_{i=1}^n\sigma_i^2$, and let for $S\in(0,S_{\max}]$
\begin{align*}
	m(S)\coloneqq \inf_{\{b\in \smallertext{B}: |\sigma(b)|^\smalltext{2}=\smallertext{S}\}}c_2(b),\; \text{with} \;
	|\sigma(b)|^2 \coloneqq \sum_{i=1}^n \sigma_i^2 b_i, \; b \coloneqq (b_1,\dots,b_n)\in (0,1]^n.
\end{align*}
Taking into account the agent's utility of consumption $u(x)$ for $x \in \R$, the agent's `constrained' Hamiltonian is
\begin{align*}
	F(x,z,S) = u(x) - \inf_{a\in \smallertext{A}} \big\{a \cdot \mathbf{1}_n z + c_1(a) \big\}-\frac12 m(S), \; (x,z,S) \in \R \times \R \times (0,S_{\max}].
\end{align*}
We verify below that the mapping $S\longmapsto m(S)$ is convex on $(0,S_{\max}]$, and consequently, $S\longmapsto F(S)$ is concave on the same interval, as required. For any $(S_1,S_2)\in(0,S_{\max}]^2$ and $\theta\in[0,1]$, define $S_\theta\coloneqq \theta S_1+(1-\theta)S_2$. First, compactness of $B$ and continuity of $|\sigma(\cdot)|^2$ ensure the existence of
$b^{(i)}\in B$ such that $|\sigma(b^{(i)})|^2=S_i$ and $c_2(b^{(i)})=m(S_i)$ for $i\in\{1,2\}$. Define then $b^{(\theta)}\coloneqq \theta b^{(1)}+(1-\theta)b^{(2)}\in B$. Because $|\sigma(\cdot)|^2$ is affine in $b$
\begin{align*}
	\big|\sigma\big(b^{(\theta)}\big)\big|^2=\theta\big|\sigma\big(b^{(1)}\big)\big|^2+(1-\theta)\big|\sigma\big(b^{(2)}\big)\big|^2=S_\theta .
\end{align*}
In other words, $b^{(\theta)} \in B$ satisfies the constraint $|\sigma(b^{(\theta)})|^2 = S_\theta$.
Finally, applying convexity of $c_2$, we have
\begin{align*}
	m(S_\theta)\le c_2\big(b^{(\theta)}\big)\le\theta c_2\big(b^{(1)}\big)+(1-\theta)c_2\big(b^{(2)}\big)=\theta m(S_1)+(1-\theta)m(S_2).
\end{align*}
The inequality above proves that $m$ is convex, implying that $F$ is concave in the $S$-variable and \Cref{ass:duality} holds.

\subsection{Duality gap}\label{ss:duality_gap}

{In the three models considered above, \Cref{ass:duality} holds, ensuring that the original contract form \eqref{eq:contract_CPT} is optimal and that the results of these papers remain valid. To the best of our knowledge, other applications of principal--agent problems with volatility control rely on closely related frameworks, as in \citeayn{elie2021mean} which extends \cite{aid2022optimal} with the same form of dynamics and costs: a similar reasoning shows that \Cref{ass:duality} still holds there. While we cannot claim that every applied model necessarily satisfies \Cref{ass:duality}, all those we are aware of actually do. Nevertheless, the assumption is not automatic: beyond usual specifications, one can construct \textit{reasonable} counter-examples where it fails, as highlighted in \cite[Section 4.3]{chiusolo2026new}. Nevertheless, we show here that the new contract form we propose allows to achieve this value.}

\medskip

We start by briefly recalling the framework introduced in \cite[Section 4.3]{chiusolo2026new}. The controlled state equation is given by
\begin{align*}
	\drm X_t = \beta^\P_t \drm W^\P_t, \; t \in [0,T], \; X_0 = 0,
\end{align*}
where $W$ is a one-dimensional Brownian motion and the agent's control process $\beta$ can only take values in $[-1,1]$. The agent's objective function to be maximised over admissible probability measures $\P \in \Pc$ is defined by
\begin{align}\label{eq:agent}
	J_{\smallertext{\rm A}} (\xi,\Sigma,\P) \coloneqq  \E^\P \bigg[ \xi - \int_0^\smallertext{T} c (\beta^\P_t) \drm t \bigg], \; c(b) = 1-b^4, \; b \in [-1,1].
\end{align} 
The principal's criteria is then defined as follows
\begin{align}\label{eq:principal}
	J_{\smallertext{\rm P}} (\xi,\Sigma,\P) \coloneqq  \E^\P \bigg[ X_T - \xi - \int_0^\smallertext{T} c_{\smallertext{\rm P}} \big(\Sigma_t \big) \drm t \bigg], \; c_{\smallertext{\rm P}} (S) \coloneqq  S^3, \; S \in [0,1].
\end{align}
In both criteria, we make explicit the dependence in the density $\Sigma$ of the quadratic variation $[ X ]$, in order to study both the original problem, in which $\Sigma$ is controlled by the agent through his choice of $\beta$, and its `contractible‑volatility' reformulation, in which the principal controls $\Sigma$ thus forcing the agent to choose his control $\beta$ so that $\beta^2 = \Sigma$.

\paragraph*{`Contractible‑volatility' case.} It is shown in \cite{chiusolo2026new} that when the principal directly controls the density $\Sigma$ of the quadratic variation, she can reach the maximum value of $-R_{\smallertext{\rm A}} - 23T/27$. More precisely, in the `contractible‑volatility' formulation, we know that the following contract form is optimal
\begin{align*}
	\xi = y_0 - \int_0^\smallertext{T} F(\Sigma_t) \drm t + \int_0^\smallertext{T} Z_t \drm X_t,
\end{align*}
where the pair of processes $(Z,\Sigma)$ has to be chosen by the principal, and for all $S \in [0,1]$, we have
\begin{align*}
	F(S) \coloneqq  \sup_{\{b \in [\smalltext{-}1,1] : b^\smalltext{2} = \smallertext{S}\}} \big\{ - c(b) \big\}
	= \sup_{\{b \in [\smalltext{-}1,1] : b^\smalltext{2} = \smallertext{S}\}} \big\{ b^4 - 1 \big\}
	= S^2 -1, \; \text{for} \; b^\circ(S) = \pm \sqrt{S}.
\end{align*}
Note that since the constrained Hamiltonian $F$ is convex---not concave---\Cref{ass:duality} is not satisfied. The agent's optimal efforts are given by the maximisers of  $F$, here $\beta_\cdot^\circ = \pm \sqrt{\Sigma_\cdot}$, which leads to $F(\Sigma_\cdot) = \Sigma_\cdot^2 -1$. Finally, one can optimise the principal's criteria defined by \eqref{eq:principal} by choosing $\Sigma \equiv 2/3$, since by pointwise optimisation
\begin{align*}
	V^\circ_{\smallertext{\rm P}}
	= - y_0 - T \bigg(1 - \max_{\smallertext{S} \in [0,1]} \big\{ S^2 - S^3 \big\} \bigg) = -y_0 - 23T/27.
\end{align*}
Maximising this value over $y_0$ under the agent's participation constraint naturally leads to the choice $y_0 = R_{\smallertext{\rm A}}$.

\paragraph*{Duality gap.} It is shown in \cite{chiusolo2026new} that, when restricting to contracts of the form \eqref{eq:contract_CPT} in the original formulation of the principal--agent problem---when $\Sigma$ is not directly controlled by the principal---, the `contractible‑volatility' value previously computed cannot be reached. More precisely, one can first compute that for $\gamma \in \R$
\begin{align*}
	H(\gamma) &\coloneqq  \sup_{b \in [-1,1]} \bigg\{\dfrac12 \gamma b^2 - c(b) \bigg\}
	= \begin{cases}
		\gamma/2,\; \mbox{if}\; \gamma > -2,\; \text{for}\; b^\star(\gamma) \in \{-1,1\}, \\
		-1,\; \mbox{if}\; \gamma = -2,\; \text{for}\; b^\star(\gamma) \in \{-1,0,1\}, \\
		-1,\; \mbox{if}\; \gamma < -2,\; \text{for}\; b^\star(\gamma) =0.
	\end{cases}
\end{align*}
Then, using the form of contract \eqref{eq:contract_CPT} in the principal's criteria \eqref{eq:principal}, where here the density of the quadratic variation is determined through the agent's optimal response to the contract, namely $\Sigma^\star_\cdot = | b^\star(\Gamma_\cdot) |^2$, $t \in [0,T]$, we obtain
\begin{align*}
	J_{\smallertext{\rm P}} \big(\xi,\Sigma^\star,\P^\star \big) = - y_0 + \E^{\P^\smalltext{\star}} \bigg[ \int_0^\smallertext{T} \bigg( H(\Gamma_t) - \dfrac12 \Gamma_t | b^\star(\Gamma_t) |^2  - c_{\smallertext{\rm P}} \big( | b^\star(\Gamma_t) |^2 \big) \bigg)\drm t \bigg],
\end{align*}
where now $\Gamma$ has to be chosen optimally. Again by pointwise optimisation, we compute that for $c_{\smallertext{\rm P}}(S) = S^3$, $S \in [0,1]$
\begin{align*}
	\max_{\gamma \in \R} \bigg\{ H(\gamma) - \dfrac12 \gamma | b^\star(\gamma)|^2  - c_{\smallertext{\rm P}} \big( |b^\star(\gamma)|^2 \big) \bigg\}
	= -1, \; \text{for} \; \gamma^\star = -2,
\end{align*}
leading to a value of $-R_{\smallertext{\rm A}} - T$, (strictly) lower than the one previously obtained in the `contractible‑volatility' case. 

\paragraph*{Reaching the `contractible‑volatility' value.} We show now that by relying on the more general class of contracts \eqref{eq:xi_2BSDE_psi} introduced in this note, the `contractible-volatility' value can indeed be reached, as theoretically proved in \Cref{sss:BSDE_proof}. For this, one can use for instance the contract form \eqref{eq:contract_new_simple}, which becomes in this specific example
\begin{align*}
	\xi = y_0 -  \int_0^\smallertext{T} F \big( \widehat \sigma^2_t \big) \drm t 
	+ \int_0^\smallertext{T} Z_t  \drm X_t + \dfrac12 \int_0^\smallertext{T} \big| \widetilde \Gamma_t -\Gamma_t \big|^2 \drm t
	- \dfrac12 \int_0^\smallertext{T} \big| \widehat \sigma^2_t-\Gamma_t \big|^2 \drm t,
\end{align*}
where $\widetilde \Gamma_t \coloneqq  (\Gamma_t \vee 0) \wedge 1$, for a control process $\Gamma$ chosen by the principal.
Given this contract, the optimal action for the agent is given by the maximiser over $b \in [-1,1]$ of
\begin{align*}
	\widehat F(\gamma, b) &\coloneqq  - c(b) - F\big(b^2 \big) - \dfrac12 \big| b^2 - \gamma \big|^2, \; \text{with} \; F (S) = S^2 - 1, \; S \in [0,1].
\end{align*}
This gives
\begin{align*}
	\max_{b \in [-1,1]} \widehat F(\gamma, b) 
	= - \dfrac12 \min_{b \in [-1,1]} \big| b^2 - \gamma \big|^2
	= - \dfrac12 \big| (\gamma \vee 0) \wedge 1 - \gamma \big|^2, 
	\; \text{for} \; b^\star(\gamma) = \pm \sqrt{(\gamma \vee 0) \wedge 1}.
\end{align*}
Therefore, under the agent's optimal response, we have $\widehat \sigma^2 = |b^\star(\Gamma)|^2 = \widetilde \Gamma$, and the contract becomes
\begin{align*}
	\xi 
	&= y_0 + T - \int_0^\smallertext{T} |b^\star(\Gamma_t)|^4 \drm t + \int_0^\smallertext{T} Z_t  \drm X_t.
\end{align*}
Hence, we can compute the principal's objective under the agent's best response,
\begin{align*}
	J_{\smallertext{\rm P}}(\xi,\widehat \sigma^2,\P^\star) &= \E^{\P^\smalltext{\star}} \bigg[ X_T - \xi - \int_0^\smallertext{T} c_{\smallertext{\rm P}}(\widehat \sigma^2_t) \drm t \bigg] 
	= - y_0 - T + \E^{\P^\smalltext{\star}} \bigg[ \int_0^\smallertext{T} \Big( |b^\star(\Gamma_t)|^4 - c_{\smallertext{\rm P}} \big( |b^\star(\Gamma_t)|^2 \big) \Big) \drm t \bigg],
\end{align*}
assuming sufficient integrability condition. Recalling that $c_{\smallertext{\rm P}} (S) = S^3$, by pointwise optimisation we have
\begin{align*}
	\max_{\gamma \in \R} \Big\{ |b^\star(\gamma)|^4 - c_{\smallertext{\rm P}} \big( |b^\star(\gamma)|^2 \big) \Big\}
	= \max_{\gamma \in \R} \Big\{ |(\gamma \vee 0) \wedge 1|^2 - |(\gamma \vee 0) \wedge 1|^3 \Big\}
	= \max_{\gamma \in [0,1]} \{ \gamma^2 - \gamma^3 \}
	= \dfrac4{27}, \; \text{for} \; \gamma^\star = \dfrac23.
\end{align*}
We thus conclude that the new contract form \eqref{eq:contract_new_simple} allows the principal to achieve her value in the  reformulated problem. To achieve the `contractible‑volatility' value, one can also use the contract form \eqref{eq:contract_Dylan}. For this, we compute
\begin{align*}
	F^{\star \star}(S) \coloneqq  \inf_{\gamma \in \R} \bigg\{H(\gamma) - \dfrac12 \gamma S \bigg\}
	= S-1, \; \text{for} \; \gamma^\star = -2.
\end{align*}
Note that this confirms that \Cref{ass:duality} is not satisfied here. Indeed,
$F^{\star \star} (S) - F_{\smallertext{\rm A}}(S) = S - S^2$, which is (strictly) positive on $(0,1)$, and equal to $0$ only if $S \in \{0,1\}$. Therefore, $F^{\star \star}(S) \neq F(S)$ for all $S \in (0,1)$. 
Considering the contract form \eqref{eq:contract_Dylan}, we obtain here
\begin{align*}
	\xi = y_0 - \int_0^\smallertext{T} \bigg( H (\Gamma_t) + \big| \widehat \sigma^2_t \big|^2 - \widehat \sigma^2_t - \dfrac12 \Gamma_t \widehat \sigma^2_t \bigg) \drm t
	+ \int_0^\smallertext{T} Z_t  \drm X_t.
\end{align*}
Plugging this form into the agent's expected utility, we obtain
\begin{align*}
	J_{\smallertext{\rm A}} (\xi,\widehat \sigma^2,\P) = y_0 + \E^\P\bigg[\int_0^\smallertext{T} \bigg(|\beta^\P_s|^2 + \dfrac12 \Gamma_s |\beta^\P_s|^2 - 1 - H(\Gamma_s) \bigg) \drm s  \bigg],
\end{align*}
and therefore the optimal effort is obtained by computing
\begin{align*}
	\max_{b \in [-1,1]} \bigg\{ |b|^2 + \dfrac12 \gamma |b|^2\bigg\}
	&= \begin{cases}
		1+\gamma/2,\; \mbox{if}\; \gamma > -2, \;\text{for}\; \widetilde b(\gamma) \in \{-1,1\}, \\
		0, \;\mbox{if}\; \gamma = -2, \;\text{for}\; \widetilde b(\gamma) \in [-1,1], \\
		0,\; \mbox{ if}\; \gamma < -2, \;\text{for}\; \widetilde b(\gamma) =0.
	\end{cases}
\end{align*}
Looking at the principal's problem defined by \eqref{eq:principal} when using contracts of the form \eqref{eq:contract_Dylan}, we have
\begin{align*}
	J_{\smallertext{\rm P}} \big(\xi,\widehat \sigma^2,\P^\star \big) 
	&= - y_0 + \E^{\P^\smalltext{\star}} \bigg[ \int_0^\smallertext{T} \bigg( H (\Gamma_t) + | \widetilde b(\Gamma_t) |^4 - | \widetilde b(\Gamma_t) |^2 - \dfrac12 \Gamma_t | \widetilde b(\Gamma_t) |^2 
	- c_{\smallertext{\rm P}} \big( | \widetilde b(\Gamma_t) |^2 \big) \bigg) \drm t \bigg],
\end{align*}
where $\Gamma$ has to be chosen optimally. Again by pointwise optimisation, we obtain for $c_{\smallertext{\rm P}}(S) = S^3$
\begin{align*}
	\Gamma_t^\star \in \argmax_{\gamma \in \R} \bigg\{ H (\gamma) + | \widetilde b(\gamma)|^4 - | \widetilde b(\gamma) |^2 - \dfrac12 \gamma | \widetilde b(\gamma) |^2 
	- c_{\smallertext{\rm P}} \big( | \widetilde b(\gamma) |^2 \big) \bigg\} \equiv -2, \; t \in [0,T].
\end{align*}
Indeed, such a choice of $\gamma = -2$ makes the agent indifferent between any effort taking value in $[-1,1]$, and gives a value of $-y_0 - T (1 + | \widetilde b(\gamma) |^4 - | \widetilde b(\gamma) |^6)$ for the principal. She can then maximise this value by choosing among agent's best response $\widetilde b(\gamma) \in \{-\sqrt{2/3}, \sqrt{2/3} \}$, and thus achieve her `contractible‑volatility' value.

\subsection{Another counter-example}\label{ss:other}

In this subsection, we exhibit a counter-example within the framework of \cite[Section 3]{cvitanic2017moral}. In particular, despite additional convexity assumptions compared to \cite{cvitanic2018dynamic}, this more restrictive model may still fail to satisfy \Cref{ass:duality}. Let $(\alpha,\beta)$ be the agent's effort taking values in $\R^2$, and consider the following dynamics for the output process
\begin{align*}
	\drm X_t = \beta^\P_t( \alpha^\P_t \drm t+\drm W_t), \; t \in [0,T],
\end{align*}
for $W$ a one-dimensional Brownian motion. The agent's cost of effort is further defined as $k(a,b) = k_1 |b| + k_2 a^{2}$ for $(a,b)\in \R^2$ and fixed constants $k_1>0$ and $k_2>0$. Note that the above specification satisfies the conditions required in \cite[Section 3]{cvitanic2017moral}: the cost function $k$ is non-negative and convex and the linear growth condition \cite[Equation (6)]{cvitanic2017moral} holds since for every $(a,b)\in \R^{2}$, $|a|+|b|\le\ C(1+|a|+|b|)$ for all $C \geq 1$.

\medskip

Within this framework, the agent's constrained Hamiltonian can be defined for $(z,S) \in \R \times \R_{\smallertext{+}}$ as
\begin{align*}
	F(z, S)&\coloneqq \sup_{\{(a,b)\in \R^\smalltext{2} : b^\smalltext{2}=\smallertext{S}\}}\big\{z a b - k_1 |b| - k_2 a^{2} \big\}
	= \sup_{\{b\in \mathbb R:  b^\smalltext{2}=\smallertext{S}\}} \bigg\{-k_1 |b| + \sup_{a \in \mathbb R}\big\{z a b -k_2 a^2\big\}\bigg\}.
\end{align*}
Standard computations give
\begin{align*}
	F(z, S) = - k_1 \sqrt{S} + \dfrac{z^2 S}{4 k_2}, \; \text{with corresponding efforts} \; b^\star(z,S) \coloneqq \pm \sqrt{S}, \; a^\star(z,S) \coloneqq \dfrac{z b^\star(z,S) }{2 k_2}, \; (z,S) \in \R \times \R_{\smallertext{+}}.
\end{align*}
Differentiating $F$ with respect to $S > 0$, we obtain for all $z \in \R$
\begin{align*}
	\partial_\smallertext{S} F(z,S)=\frac{z^{2}}{4 k_2}-\frac{k_1}{2\sqrt S},\;
	\partial_{\smallertext{S}\smallertext{S}} F(z,S)=\frac{k_1}{4 S^{3/2}}.
\end{align*}
Recalling that $k_1$ is a positive constant, the second derivative is positive for all $S > 0$. In other words, $F$ is strictly convex with respect to the $S$-variable on $(0,\infty)$, implying that \Cref{ass:duality} is not satisfied in this context.

\appendix
\section{Appendix}\label{sec:app}
\begin{proof}[Proof of Proposition \ref{prop:agent}]
Let $\psi \in \Oc $, $y_0 \geq R_{\smallertext{\rm A}}$ and $(Z,\Gamma) \in \Vc^\psi$, and consider the associated contract $\xi \in \Cc$ as in the statement of the proposition. First, the integrability condition on $Y^{y_{\smalltext{0}},\smallertext{Z},\smallertext{\Gamma},\psi}$ in \Cref{def:contrat_new} directly implies that the terminal payment $\xi$ satisfies the integrability condition \eqref{eq:integrability_contract_agent}. To ensure that $\xi \in \Xi$, it therefore suffices to verify that the agent's participation constraint is satisfied.
	With this in mind, letting $\P \in \Pc$, we can use It\^o's formula to compute
	\begin{align*}
		\Kc_\smallertext{T}^\P Y_\smallertext{T}^{y_{\smalltext{0}},\smallertext{Z},\smallertext{\Gamma},\psi} 
		&= y_0 
		- \int_0^\smallertext{T} \Kc_s^\P \Big( \psi_s \big(Y^{y_{\smalltext{0}},\smallertext{Z},\smallertext{\Gamma},\psi}_s, Z_s,\Gamma_s,\widehat \sigma^2_s \big) + k_s \big(\beta_s^\P \big) Y_s - Z_s \cdot \mu_s(\beta^\P_s) \Big) \drm s+ \int_0^\smallertext{T} \Kc_s^\P Z_s \cdot \sigma_s (\beta^\P_s) \drm W^\P_s.
	\end{align*}

	Plugging this into the agent's utility defined in \eqref{eq:pb_agent} and using the integrability conditions on $Z$, we obtain
	\begin{align*}
		J_{\smallertext{\rm A}} (\xi,\P )
		&= y_0 + \E^\P \bigg[
		\int_0^\smallertext{T} \Kc_s^\P \Big( f_s \big(Y^{y_{\smalltext{0}},\smallertext{Z},\smallertext{\Gamma},\psi}_s,Z_s,\beta^\P_s \big) - \psi_s \big(Y^{y_{\smalltext{0}},\smallertext{Z},\smallertext{\Gamma},\psi}_s, Z_s,\Gamma_s,\widehat \sigma^2_s \big) \Big) \drm s \bigg],
	\end{align*}
	recalling that $f$ is defined in \eqref{eq:hamiltonian_constrained}. Using \Cref{cond:revealing} and the definition of $F$ in \eqref{eq:hamiltonian_constrained}, we notice that
	\begin{align}\label{eq:optimal_effort}
		\sup_{ b \in \smallertext{B}} \big\{ f_t (x,y,z,b)-\psi_t(x,y,z,\gamma,S) \big\} = \sup_{\smallertext{S} \in \smallertext{{\bf \Sigma}}_\smalltext{t}(x)} \bigg\{ \sup_{b \in \smallertext{B}_\smalltext{t}(x,\smallertext{S})} \big\{ f_t(x,y,z,b) \big\} - \psi_t(x,y,z,\gamma,S)  \bigg\} = 0,
	\end{align}
	for all $(t,x,y,z,\gamma) \in [0,T] \times \Omega \times \R \times \R^d \times E$, implying both that $J_{\smallertext{\rm A}} (\xi,\P ) \leq y_0$ for all $\P \in \Pc$ and $V_{\smallertext{\rm A}} (\xi) = y_0$.
	
	\medskip
	It remains to show that there exists (at least) an optimal response for the agent to $\xi$. This can be deduced from \eqref{eq:def-optimal} in \Cref{def:contrat_new}, which states that there is $\P^\star \in \Pc$ such that
	\begin{align*}
		\psi_t \big(Y^{y_\smalltext{0},\smallertext{Z},\smallertext{\Gamma},\psi}_t,Z_t,\Gamma_t,\widehat \sigma^2_t\big) = F_t \big(Y^{y_\smalltext{0},\smallertext{Z},\smallertext{\Gamma},\psi}_t,Z_t,\widehat \sigma^2_t\big) = f_t \big(Y_t^{y_\smalltext{0}, \smallertext{Z},\smallertext{\Gamma},\psi}, Z_t, \beta^{\P^{\smalltext{\star}}}_t \big), \drm t \otimes \P^\star \text{\rm --a.e. on } [0,T] \times \Omega,
	\end{align*}
	implying both the existence of a process $\mathfrak{S}^{\star,\psi}_\cdot \coloneqq S^{\star,\psi}_\cdot(Y^{y_\smalltext{0},\smallertext{Z},\smallertext{\Gamma},\psi}_\cdot,Z_\cdot,\Gamma_\cdot)$, where $S^{\star,\psi}$ satisfies
	\begin{align*}
		S^{\star,\psi}_t(x,y,z,\gamma) \in \argmin_{\smallertext{S} \in \smallertext{{\bf \Sigma}}_\smalltext{t}(x)} \big\{ \psi_t(x,y,z,\gamma,S) - F_t(x,y,z,S) \big\} , \; (t,x,y,z,\gamma) \in [0,T] \times \Omega \times \R \times \R^d \times E,
	\end{align*}
	and of a process $\mathfrak{b}^{\star,\psi}_\cdot \coloneqq b^{\star,\psi}_\cdot(Y^{y_\smalltext{0},\smallertext{Z},\smallertext{\Gamma},\psi}_\cdot,Z_\cdot,\Gamma_\cdot)$, with $b^{\star,\psi}$ defined by \eqref{eq:optimum}. Hence $J_{\smallertext{\rm A}} (\xi,\P ) \leq y_0$ for all $\P \in \Pc$, with $J_{\smallertext{\rm A}} (\xi,\P ) = y_0$ if and only if the agent's effort is set to $\mathfrak{b}^\star_t = b^\star_t(Y^{y_\smalltext{0},\smallertext{Z},\smallertext{\Gamma},\psi}_t,Z_t,\Gamma_t)$,  $\mathrm{d}t\otimes\P$--a.e., where $b^\star$ satisfies \eqref{eq:optimum}. 
\end{proof}

{\small\bibliography{bibliographyDylan}}

\end{document}